\numberwithin{equation}{section}
\theoremstyle{plain}
\newcommand{\numbereqn}{\addtocounter{equation}{1}\tag{\theequation}} 
\renewcommand{\epsilon}{\varepsilon}
\renewcommand{\epsilon}{\varepsilon}
\newtheorem{theorem}{Theorem}[section]
\newtheorem{corollary}{Corollary}[section]
\newtheorem{prop}{Proposition}[subsection]
\theoremstyle{definition}
\newtheorem{remark}{Remark}[section]
\newcommand\widecheck[1]{%
	\savestack{\tmpbox}{\stretchto{%
			\scaleto{%
				\scalerel*[\widthof{\ensuremath{#1}}]{\kern-.6pt\bigwedge\kern-.6pt}%
				{\rule[-\textheight/2]{1ex}{\textheight}}
			}{\textheight}%
		}{0.5ex}}%
	\stackon[1pt]{#1}{\scalebox{-1}{\tmpbox}}%
}
\renewcommand{\hat}{\widehat}
\newcommand{\etab}{\bm{\eta}}
\newcommand{\one}{\mathbbm{1}}
\newcommand{\js}{\text{JS}}
\newcommand{\fl}{\text{FL}}
\DeclareMathOperator{\vms}{{vM}_{s}}
\DeclareMathOperator{\vmc}{{vM}_{c}}
\DeclareMathOperator{\vm}{{vM}}
\DeclareMathOperator{\var}{{var}}
\DeclareMathOperator{\sgn}{{sgn}}
\begin{document}
	
%
		
		
		
\title{On the circular correlation coefficients for bivariate von Mises distributions on a torus \thanks{\textbf{Key words and phrases:}
		{toroidal angular models},
		{bivariate von Mises distribution},
		{circular correlation},
		{circular statistics},
		{directional data},
		{von Mises sine model},
		{von Mises cosine model.} 
		E-mail for correspondence: \texttt{samuel.wong@uwaterloo.ca}
}}

\author{Saptarshi Chakraborty$^1$ and Samuel W.K. Wong$^{2}$ \\
\\ $^1$ Department of Epidemiology \& Biostatistics, \\ Memorial Sloan-Kettering Cancer Center, New York, NY, U.S.A.
\\	$^2$ Department of Statistics and Actuarial Science, \\University of Waterloo, Waterloo, ON, Canada
}

\date{\today}
		


		
		
		

\maketitle		

\begin{abstract}
	This paper studies circular correlations for the bivariate von Mises sine and cosine distributions.  These are two simple and appealing models for bivariate angular data with five parameters each that have interpretations comparable to those in the ordinary bivariate normal model.   However, the variability and association of the angle pairs cannot be easily deduced from the model parameters unlike the bivariate normal.	Thus to compute such summary measures, tools from circular statistics are needed. We derive analytic expressions and study the properties of the Jammalamadaka-Sarma and Fisher-Lee circular correlation coefficients for the von Mises sine and cosine models.  Likelihood-based inference of these coefficients from sample data is then presented.  
	The correlation coefficients are illustrated with numerical and visual examples, and the maximum likelihood estimators are assessed on simulated and real data, with comparisons to their non-parametric counterparts.  Implementations of these computations for practical use are provided in our  R package \texttt{BAMBI}.
\end{abstract}

	\section{Introduction}
	
	Directional (or circular) statistics concerns the modeling and analysis of data that can be expressed via directions (unit vectors), axes or rotations.  In the case where data are angles in $[-\pi, \pi)$ on a circle (univariate) or a toroid (multivariate), they are called \emph{angular data}.  
	This paper focuses on assessing the correlation in bivariate angular data, where the coordinates have support  $[-\pi, \pi)^2$.  Since angles have an inherent wraparound nature, specialized distributions and descriptive statistics have been developed for their study (see, e.g.,  \cite{mardia:jupp:2009} for a review).  We begin with a review of the relevant concepts that motivate our work.
	
	
	\subsection{Univariate and bivariate von Mises distributions}
	
	The von Mises distribution is perhaps the most well-known univariate circular distribution, because of its ease of use and close relationship with the normal distribution (see, e.g., \citep{fisher:1995, mardia:jupp:2009}). Formally, an angular random variable $\Theta$ with support $[-\pi, \pi)$ (or any other interval of length $2\pi$) is said to follow the von Mises distribution with parameters $\mu_1 \in [-\pi, \pi)$ and $\kappa_1 \geq 0$, if it has density 
	\[
	f_{\vm}(\theta) = (2\pi I_0 (\kappa_1))^{-1} \exp\{\kappa_1 \cos(\theta - \mu_1) \}
	\]
	where $I_r(\cdot)$ denotes the modified Bessel function of the first kind with order $r$. A multivariate generalization for this distribution is however not straightforward and there is no unique way of defining a  multivariate distribution with univariate von Mises-like marginals and conditionals.  In the bivariate case (i.e., paired angles on a torus) two versions have been suggested for practical use, namely the sine model (\citet{singh:2002}) and the cosine model (\citet{mardia:2007}); both have found important applications in protein bioinformatics (\citep{lennox:2009, bhattacharya:2015}). Formal definitions of the two distributions are as follows.  Given the parameters  $\mu_1, \mu_2 \in [-\pi, \pi)$, $\kappa_1, \kappa_2 \geq 0$ and $\kappa_3 \in (-\infty, \infty)$ a pair of angular random variables $(\Theta, \Phi)$ with support $[-\pi, \pi)^2$ is said to follow the (bivariate) von Mises sine distribution, denoted $(\Theta, \Phi) \sim \vms(\mu_1, \mu_2, \kappa_1, \kappa_2, \kappa_3)$, if the pair has joint density
	\begin{align*} \label{vms}
	f_{\vms}(\theta, \phi) = { C_s(\kappa_1, \kappa_2, \kappa_3)}^{-1} \exp [ & \kappa_1 \cos(\theta - \mu_1) + \kappa_2 \cos(\phi - \mu_2) + \\ 
	& \quad  \kappa_3 \sin(\theta - \mu_1) \sin(\phi - \mu_2) ] \numbereqn
	\end{align*}
	where the reciprocal of the normalizing constant is given by
	\begin{align} \label{c_vms}
	C_s(\kappa_1, \kappa_2, \kappa_3) = 4 \pi^2 \sum_{m = 0} ^\infty  {{2m}\choose{m}} \left(\frac{\kappa_3^2}{4 \kappa_1 \kappa_2}\right)^m I_m(\kappa_1) I_m(\kappa_2).
	\end{align}
	In contrast, the pair is said to follow the von Mises cosine distribution,
	denoted $(\Theta, \Phi) \sim \vmc(\mu_1, \mu_2, \kappa_1, \kappa_2, \kappa_3)$, if the pair has joint density
	\begin{align*} \label{vmc}
	f_{\vmc}(\theta, \phi) =  C_c(\kappa_1, \kappa_2, \kappa_3)^{-1} \exp [ & \kappa_1 \cos(\theta - \mu_1) + \kappa_2 \cos(\phi - \mu_2) + \\
	& \quad \kappa_3 \cos(\theta - \mu_1 - \phi + \mu_2) ] \numbereqn
	\end{align*}
	where  the reciprocal of the normalizing constant is
	given by
	\begin{align} \label{c_vmc}
	C_c(\kappa_1, \kappa_2, \kappa_3) = 4 \pi^2 \left\lbrace I_0(\kappa_1) I_0(\kappa_2) I_0(\kappa_3) + 2 \sum_{m = 0} ^\infty I_m(\kappa_1) I_m(\kappa_2) I_m(\kappa_3)  \right \rbrace.
	\end{align}
	Note that both densities reduce to the Uniform density over $[-\pi, \pi]^2$ when $\kappa_1$, $\kappa_2$ and $\kappa_3$ are all zero, which is analogous to the univariate von Mises circular density. 
	
	Although other generalizations with more parameters have been studied theoretically (see, e.g., \citet{mardia:1975, rivest:1988}), the von Mises sine and cosine distributions are appealing because of their simplicity and ease of use. Moreover, both the models have close relationships with the bivariate normal distribution on $\mathbb{R}^2$. First, both the models have five parameters, with comparable interpretations to those in the  bivariate normal. Second, under certain conditions both densities closely approximate the bivariate normal density. Due to symmetry of the corresponding marginal distributions (see \citep{mardia:2007, singh:2002}), it immediately follows that $\mu_1$ and $\mu_2$ are the respective circular means in both the sine and cosine models (see, e.g., \citep{mardia:jupp:2009, jammalamadaka:2001} for the definition of circular mean). The parameters $\kappa_1$ and $\kappa_2$ are  the so-called ``concentration'' (or ``anti-variance'') parameters, and $\kappa_3$ is the ``covariance'' (or ``correlation'') parameter \citep{mardia:2007}, and together they describe the concentrations (or precisions) and dependence between the random coordinates $\Theta$ and $\Phi$. As in the bivariate normal model, a necessary and sufficient condition for $\Theta$ and $\Phi$ to be independent is given by $\kappa_3 = 0$.
	
	It is to be noted however, that $\kappa_1$, $\kappa_2$ and $\kappa_3$ need to be reported together to describe the variability and association between $\Theta$ and $\Phi$ -- the parameters $\kappa_1$ and $\kappa_2$ alone do not characterize the variances and $\kappa_3$ alone does not explain the association between $\Theta$ and $\Phi$. Moreover, the variances and association depend on these parameters through complicated functions which 	are difficult to visualize, and they cannot be well approximated by any simple functions of the three parameters in general. This is a key distinction from a bivariate normal model. There is also no requirement that the square of the ``covariance'' parameter ($\kappa_3$) be bounded above by the product of the ``concentration'' parameters ($\kappa_1$ and $\kappa_2$). This flexibility permits bimodality in the sine model density when $\kappa_3^2 > \kappa_1 \kappa_2$, and in the cosine model density when $\kappa_3 < - \kappa_1 \kappa_2/(\kappa_1+\kappa_2)$ (see \citep{mardia:2007}).  Specifically, we are interested in association for the circular context, which we define next.
	
	
	\subsection{Circular correlation coefficients}
	
	To describe the association between an angle pair, we may use circular correlation coefficients. 
	Different parametric circular correlation coefficients have been proposed in the literature. In this paper we consider the Jammalamadaka-Sarma coefficient (\citet{jammalamadaka:1988}) and the Fisher-Lee coefficient (\citet{fisher:1983}), which were designed with analogy to the ordinary correlation coefficient.\footnote{In the literature the JS and FL correlation coefficients have been denoted by $\rho_c$ and $\rho_T$ respectively, following the authors' notations. We however, shall use $\rho_{\fl}$ and $\rho_{\js}$ in this paper for clarity.} Formal definitions of these coefficients for a pair of random toroidal angles $(\Theta, \Phi)$ are given as follows. Let $\mu_1$ and $\mu_2$ be the circular means of $\Theta$ and $\Phi$ respectively. Then the Jammalamadaka-Sarma (JS) circular correlation coefficient is defined as
	\begin{equation} \label{rho_js_defn}
	\rho_{\js}(\Theta, \Phi) = \frac{E\left[\sin (\Theta - \mu_1) \sin (\Phi - \mu_2) \right]}{\sqrt{E\left[\sin^2 (\Theta - \mu_1) \right] E\left[\sin^2  (\Phi - \mu_2)\right]}}.
	\end{equation} 
    Now let $(\Theta_1, \Phi_1)$ and $(\Theta_2, \Phi_2)$ be i.i.d. copies of $(\Theta, \Phi)$. Then the Fisher-Lee (FL) circular correlation coefficient is defined by
	\begin{equation} \label{rho_fl_defn}
	\rho_{\fl}(\Theta, \Phi) = \frac{E\left[\sin (\Theta_1 - \Theta_2) \sin (\Phi_1 - \Phi_2) \right]}{\sqrt{E\left[\sin^2 (\Theta_1 - \Theta_2) \right] E\left[\sin^2  (\Phi_1 - \Phi_2)\right]}}.
	\end{equation}
	Observe that $\rho_{\js}$ resembles the standard form of the usual Pearson product moment correlation coefficient, while $\rho_{\fl}$ is analogous to its U-statistic form. Both $\rho_{\js}$ and $\rho_{\fl}$ possess properties similar to the ordinary correlation coefficient. In particular, $\rho_{\js}, \rho_{\fl} \in [-1, 1]$ and they are equal to 1 (-1) under perfect positive (negative) toroidal-linear (\emph{T-linear}) relationship \citep{fisher:1983,jammalamadaka:1988}. Moreover, under independence, they are both equal to zero, although the reverse implication is not necessarily true.
	
	In practice the sample versions of these coefficients, obtained by replacing the expectations by sample averages and the circular mean parameters ($\mu_1$ and $\mu_2$ in $\rho_{\js}$) by the sample circular means, can be used for estimation of the population coefficients. The resulting estimates are called the non-parametric (or distribution-free) estimates of $\rho_{\js}$ and $\rho_{\fl}$ in this paper.  Approximate confidence intervals for the population coefficients can then be constructed using the asymptotic normal distributions of these non-parametric estimates \citep{fisher:1983,jammalamadaka:1988}. 
	
	\subsection{Contribution of this work}
	
	Suppose the von Mises sine or cosine distribution is used to model bivariate angular data.  This leads to two natural questions that form the basis of our contributions in this paper:  (a) what are the properties of $\rho_{\js}$ and $\rho_{\fl}$ in these bivariate von Mises distributions? and  (b) how does likelihood-based inference on $\rho_{\js}$ and $\rho_{\fl}$ perform when these distributions are fitted to sample data?

	The first question is addressed in Section~\ref{sec_thm_cor}.  We note that analytic expressions for the true or population versions of $\rho_{\js}$ and $\rho_{\fl}$ need to be derived for specific distributions. In their original papers \citep{jammalamadaka:1988, fisher:1983}, such expressions for $\rho_{\js}$ and $\rho_{\fl}$ were given for the bivariate wrapped normal distribution (which is the \emph{wrapped} version of the bivariate normal distribution on $[-\pi, \pi)$). However, the correlation coefficients for the bivariate von Mises distributions have not been previously studied. As a theoretical contribution of this paper, we derive expressions for $\rho_{\fl}$ and $\rho_{\js}$ for both von Mises sine and cosine distributions, and discuss additional properties of these correlation coefficients that follow from these expressions. The analytic forms of these expressions involve singly infinite series containing Bessel functions, and can be easily computed in any standard statistical software. 	As a practical consequence, we have incorporated fast and accurate evaluations of these theoretical quantities in our R package \texttt{BAMBI} \citep{bambi_rpack}.  
	We conclude the section with numerical and visual illustrations that provide insights into the behavior and interpretability of the two correlation coefficients.
	
	We turn to the second question in Section \ref{sec:inference}.  We consider the practical situation where the von Mises sine or cosine models are fitted to sample data via the method of maximum likelihood.  Then, likelihood theory provides corresponding estimates of $\rho_{\js}$ and $\rho_{\fl}$ based on these fitted parametric distributions, along with asymptotic normal confidence intervals.  We present statistical inference for these coefficients, and illustrate their application on simulated and real data examples.  We also compare the performance of the likelihood-based estimates to the non-parametric counterparts  \citep{fisher:1983,jammalamadaka:1988}.
	
	
	
	\section{Circular correlation coefficients for bivariate von Mises sine and cosine distributions} \label{sec_thm_cor}
	
	In this section, we first present theoretical properties associated with the circular correlation coefficients for bivariate von Mises sine and cosine distributions.  Following, we numerically and visually illustrate these properties on examples of these distributions.
	
	\subsection{Theoretical properties}
	
	We begin by deriving analytic expressions for $\rho_{\js}$ and $\rho_{\fl}$ for both von Mises sine and cosine models in Theorem~\ref{thm_vms} and \ref{thm_vmc}. The expressions for  the marginal circular variances are also provided for completeness.\footnote{Circular variance is defined for an angular variable $\Theta$ as $\var(\Theta) = 1-E(\cos(\Theta))$ (see, e.g.,~\cite{jammalamadaka:2001}).  Expressions for $\var(\Theta)$ and $\var(\Phi)$ for the von Mises sine distribution were first provided in \citet{singh:2002}.} The proofs are based on a series of technical results, which we state and prove in Propositions~\ref{prop_vms_results}-\ref{prop_vmc_results} in Appendix~\ref{appen_tech_results}. We then establish connections between the signs of the ``covariance'' parameters in the original models, and the sign of the corresponding $\rho_{\js}$ (Corollary~\ref{cor_js_sign}). This is followed by Corollary~\ref{cor_fl_js_reln} exhibiting a mathematical relationship between $\rho_{\fl}$ and $\rho_{\js}$. We make a few  interesting remarks which provide deeper insights to the behaviors of the two correlation coefficients under the sine and cosine models. Finally, in Corollaries~\ref{cor_circ_lin_reln_vmsin} and \ref{cor_circ_lin_reln_vmcos}, we provide sufficient conditions for  $\rho_{\fl}$ and $\rho_{\js}$ to be approximately equal, and well-approximated by simple closed form functions of $\kappa_1$, $\kappa_2$ and $\kappa_3$.

	\begin{theorem} \label{thm_vms}
		Let $(\Theta, \Phi)$ have a joint bivariate von Mises sine distribution \citep{singh:2002} with parameters $\kappa_1, \kappa_2, \kappa_3, \mu_1$, and $\mu_2$. 
		\begin{enumerate}
			\item If $(\Theta_1, \Phi_1)$ and $(\Theta_2, \Phi_2)$ denotes two i.i.d. copies of $(\Theta, \Phi)$, then the Fisher-Lee circular correlation coefficient (\ref{rho_fl_defn}) between $\Theta$ and $\Phi$ is given by 
			\begin{align*}
			\rho_{\fl} (\Theta, \Phi) 
			&= \frac{\left(\frac{1}{C_s} \frac{\partial C_s}{\partial \kappa_3} \right) \left(\frac{1}{C_s} \frac{\partial^2 C_s}{\partial \kappa_1 \partial \kappa_2} \right)}{\sqrt{\left(\frac{1}{C_s} \frac{\partial^2 C_s}{\partial \kappa_1^2} \right) \left(1 - \frac{1}{C_s} \frac{\partial^2 C_s}{\partial \kappa_1^2} \right) \left(\frac{1}{C_s} \frac{\partial^2 C_s}{\partial \kappa_2^2} \right) \left(1 - \frac{1}{C_s} \frac{\partial^2 C_s}{\partial \kappa_2^2} \right)}}.  \numbereqn
			\end{align*}
			
			\item The Jammalamadaka-Sarma circular correlation coefficient (\ref{rho_js_defn}) between $\Theta$ and $\Phi$ is given by 
			\begin{align*}
			\rho_{\js} (\Theta, \Phi) 
			&= \frac{\frac{1}{C_s} \frac{\partial C_s}{\partial \kappa_3} }{\sqrt{\left(1 - \frac{1}{C_s} \frac{\partial^2 C_s}{\partial \kappa_1^2} \right)  \left(1 - \frac{1}{C_s} \frac{\partial^2 C_s}{\partial \kappa_2^2} \right)}}. \numbereqn 
			\end{align*}
			
			\item The circular variances for $\Theta$ and $\Phi$ are given by \citep{singh:2002}
			\begin{align*}
			\var(\Theta) = 1 - \frac{1}{C_s} \frac{\partial C_s}{\partial \kappa_1} \text{ and } \var(\Phi) = 1 - \frac{1}{C_s} \frac{\partial C_s}{\partial \kappa_2}.
			\end{align*}
		\end{enumerate}
		
		\noindent Here  $C_s$ denotes the reciprocal of the normalizing constant as given in (\ref{c_vms}), and infinite series representations of the partial derivatives of $C_s$ are given in Remark~\ref{del_C_expr}. 
	\end{theorem}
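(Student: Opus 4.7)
The strategy is to reduce every expectation that appears in the definitions of $\rho_{\js}$, $\rho_{\fl}$, $\var(\Theta)$, and $\var(\Phi)$ to a logarithmic derivative of the normalizing constant $C_s$ of (\ref{c_vms}). First, the substitution $(\theta,\phi)\mapsto(\theta+\mu_1,\phi+\mu_2)$ in the defining integral shows that $C_s$ depends only on $(\kappa_1,\kappa_2,\lambda)$. Differentiating under the integral sign (justified because the integrand is smooth and bounded on the compact torus) yields, writing $c_\Theta=\cos(\Theta-\mu_1)$, $s_\Theta=\sin(\Theta-\mu_1)$, $c_\Phi=\cos(\Phi-\mu_2)$, and $s_\Phi=\sin(\Phi-\mu_2)$, the identities
\begin{align*}
\frac{1}{C_s}\frac{\partial C_s}{\partial \kappa_1} &= E[c_\Theta], & \frac{1}{C_s}\frac{\partial^2 C_s}{\partial \kappa_1^2} &= E[c_\Theta^2], \\
\frac{1}{C_s}\frac{\partial^2 C_s}{\partial \kappa_1 \partial \kappa_2} &= E[c_\Theta c_\Phi], & \frac{1}{C_s}\frac{\partial C_s}{\partial \lambda} &= E[s_\Theta s_\Phi],
\end{align*}
together with the analogous formulas in $\kappa_2$. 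Because $\mu_1$ is the circular mean of $\Theta$, $E[s_\Theta]=0$, and hence $\var(\Theta)=1-E[c_\Theta]$ (and similarly for $\Phi$), proving part~(3). Substituting the identities above and $E[s_\Theta^2]=1-E[c_\Theta^2]$ (and likewise for $\Phi$) into (\ref{rho_js_defn}) gives part~(2) directly.

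For part~(1), write $\Theta_j-\Theta_k=(\Theta_j-\mu_1)-(\Theta_k-\mu_1)$ and similarly for $\Phi$, and expand via $\sin(A-B)=\sin A\cos B-\cos A\sin B$. Using the independence of the two IID copies and collapsing equidistributed terms yields
\begin{align*}
E[\sin(\Theta_1-\Theta_2)\sin(\Phi_1-\Phi_2)] &= 2\bigl(E[s_\Theta s_\Phi]\,E[c_\Theta c_\Phi] - E[s_\Theta c_\Phi]\,E[c_\Theta s_\Phi]\bigr), \\
E[\sin^2(\Theta_1-\Theta_2)] &= 2\bigl(E[s_\Theta^2]\,E[c_\Theta^2] - (E[s_\Theta c_\Theta])^2\bigr),
\end{align*}
with an analogous identity for $E[\sin^2(\Phi_1-\Phi_2)]$. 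The key observation is that the centered density of $(\Theta-\mu_1,\Phi-\mu_2)$ is invariant under the joint reflection $(\alpha,\beta)\mapsto(-\alpha,-\beta)$, since each of $\cos\alpha$, $\cos\beta$, and $\sin\alpha\sin\beta$ is even under this map. Consequently every ``cross'' expectation $E[s_\Theta c_\Phi]$, $E[c_\Theta s_\Phi]$, $E[s_\Theta c_\Theta]$, $E[s_\Phi c_\Phi]$ integrates an odd function against a symmetric density and therefore vanishes. Substituting these vanishings and the derivative identities from the first paragraph produces the claimed FL formula after cancellation of the common factor of $2$.

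The only step requiring genuine care is the symmetry argument in part~(1): without the vanishing of these cross moments, the resulting expression would not factor into the clean product form stated in the theorem. Everything else reduces to term-by-term substitution or routine differentiation under the integral. The explicit infinite-series representations of $C_s$ and its partial derivatives, which are needed for practical numerical evaluation of the formulas and for the downstream corollaries, are collected separately in Remark~\ref{del_C_expr} and the technical propositions of the appendix.
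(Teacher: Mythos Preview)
Your proof is correct and follows essentially the same architecture as the paper's: differentiate $C_s$ under the integral sign to identify the required moments, expand the FL numerator and denominator via the angle-difference formula and IID factorization, and then kill the cross moments $E[s_\Theta c_\Phi]$, $E[c_\Theta s_\Phi]$, $E[s_\Theta c_\Theta]$, $E[s_\Phi c_\Phi]$ by symmetry.

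The one noteworthy difference is in how you establish that these cross moments vanish. The paper (Proposition~\ref{prop_vms_results}\ref{E_sin_x_cos_y}) conditions on $\Theta$, uses that $\Phi\mid\Theta=\theta$ is univariate von Mises, computes $E(\sin\Phi\mid\Theta)$ explicitly via Proposition~\ref{prop_Esinx_vm}, and then observes that the resulting integrand is odd with respect to the symmetric marginal of $\Theta$. Your argument is more direct: you observe that the centered joint density is invariant under the simultaneous reflection $(\alpha,\beta)\mapsto(-\alpha,-\beta)$, and that each cross term is odd under this map. This single observation handles all four vanishings at once and avoids any appeal to the conditional or marginal structure. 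The paper's route has the side benefit of recording the conditional von Mises structure (which is of independent interest and reused elsewhere), but for the purposes of this theorem your symmetry argument is cleaner and entirely sufficient.
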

	
		\begin{proof}
		Without loss of generality, assume that $\mu_1 = \mu_2 = 0$.
		\begin{enumerate}
			\item 	
			Note that, because $(\Theta_1, \Phi_1)$ $(\Theta_2, \Phi_2)$ are i.i.d.,
			\begin{align*}
			E\left[\sin (\Theta_1 - \Theta_2) \sin (\Phi_1 - \Phi_2) \right] 
			&\stackrel{(a)}{=} 2\: E \left(\sin \Theta_1 \sin \Phi_1 \right) E\left(\cos \Theta_1 \cos \Phi_1\right) \\ 
			&\stackrel{(b)}{=} 2\: \left(\frac{1}{C_s} \frac{\partial C_s}{\partial \kappa_3} \right) \left(\frac{1}{C_s} \frac{\partial^2 C_s}{\partial \kappa_1 \partial \kappa_2} \right)
			\end{align*}
			where $(a)$ follows from Proposition~\ref{vms_expec_res}~\ref{E_sin_x_cos_y} and  $(b)$ follows from Proposition~\ref{vms_expec_res}~\ref{E_sin_x_sin_y} and \ref{E_cos_x_cos_y}. Moreover,
			\begin{align*}
			E\left[\sin^2 \left(\Theta_1 - \Theta_2\right)\right] 
			&\stackrel{(c)}{=} 2 \: E \left(\cos^2 \Theta_1 \right) E \left(\sin^2 \Theta_1 \right) \\ 
			&= 2 \: E \left(\cos^2 \Theta_1 \right) \left[1 - E \left(\cos^2 \Theta_1 \right)\right] \\
			&\stackrel{(d)}{=} 2 \left(\frac{1}{C_s} \frac{\partial^2 C_s}{\partial \kappa_1^2} \right) \left(1 - \frac{1}{C_s} \frac{\partial^2 C_s}{\partial \kappa_1^2} \right) 
            \end{align*}
			with $(c)$ and $(d)$ following from Propositions~\ref{vms_expec_res}~\ref{E_sin_x_cos_x} and \ref{vms_expec_res}~\ref{E_cos2_x} respectively. Similarly
			\[
			E\left[\sin^2 \left(\Phi_1 - \Phi_2\right)\right] =  \left(\frac{1}{C_s} \frac{\partial^2 C_s}{\partial \kappa_2^2} \right) \left(1 - \frac{1}{C_s} \frac{\partial^2 C_s}{\partial \kappa_2^2} \right).
			\]
			This completes the proof.

			\item From Proposition~\ref{vms_expec_res}~\ref{E_sin_x_sin_y} and  Proposition~\ref{vms_expec_res}~\ref{E_cos2_x}
			\begin{gather*}
			E\left(\sin \Theta \sin \Phi \right) = \frac{1}{C_s} \frac{\partial C_s}{\partial \kappa_3} \\
			E \left(\sin^2 \Theta \right) = 1 - \frac{1}{C_s} \frac{\partial^2 C_s}{\partial \kappa_1^2} \\ \text{ and } E \left(\sin^2 \Phi \right) = 1 - \frac{1}{C_s} \frac{\partial^2 C_s}{\partial \kappa_2^2}.
			\end{gather*}
			The proof is completed by plugging these expressions into the formula.
			
			\item This is proved in \citet{singh:2002} (Proposition~\ref{vms_expec_res}~\ref{E_cos_x}).
		\end{enumerate}	
	\end{proof}

	\begin{theorem} \label{thm_vmc}
		Let $(\Theta, \Phi)$ have a joint bivariate von Mises cosine distribution \citep{mardia:2007} with parameters $\kappa_1, \kappa_2, \kappa_3, \mu_1$, and $\mu_2$. 
		\begin{enumerate}
			\item If $(\Theta_1, \Phi_1)$ and $(\Theta_2, \Phi_2)$ denotes two i.i.d. copies of $(\Theta, \Phi)$, then the Fisher-Lee circular correlation coefficient (\ref{rho_fl_defn}) between $\Theta$ and $\Phi$ is given by 
			\begin{align*}
			\rho_{\fl} (\Theta, \Phi) 
			&= \frac{\left(\frac{1}{C_c} \left\{\frac{\partial C_c}{\partial \kappa_3} -  \frac{\partial^2 C_c}{\partial \kappa_1 \partial \kappa_2} \right\} \right) \left(\frac{1}{C_c} \frac{\partial^2 C_c}{\partial \kappa_1 \partial \kappa_2} \right)}{\sqrt{\left(\frac{1}{C_c} \frac{\partial^2 C_c}{\partial \kappa_1^2} \right) \left(1 - \frac{1}{C_c} \frac{\partial^2 C_c}{\partial \kappa_1^2} \right) \left(\frac{1}{C_c} \frac{\partial^2 C_c}{\partial \kappa_2^2} \right) \left(1 - \frac{1}{C_c} \frac{\partial^2 C_c}{\partial \kappa_2^2} \right)}}.  \numbereqn
			\end{align*}
			
			\item The Jammalamadaka-Sarma circular correlation coefficient (\ref{rho_js_defn}) between $\Theta$ and $\Phi$ is given by 
			\begin{align*}
			\rho_{\js} (\Theta, \Phi) 
			&= \frac{\frac{1}{C_c} \left\{\frac{\partial C_c}{\partial \kappa_3} -  \frac{\partial^2 C_c}{\partial \kappa_1 \partial \kappa_2} \right\} }{\sqrt{\left(1 - \frac{1}{C_c} \frac{\partial^2 C_c}{\partial \kappa_1^2} \right)  \left(1 - \frac{1}{C_c} \frac{\partial^2 C_c}{\partial \kappa_2^2} \right)}}. \numbereqn 
			\end{align*}
			
			\item The circular variances for $\Theta$ and $\Phi$ are given by
			\begin{align*}
			\var(\Theta) = 1 - \frac{1}{C_c} \frac{\partial C_c}{\partial \kappa_1} \text{ and } \var(\Phi) = 1 - \frac{1}{C_c} \frac{\partial C_c}{\partial \kappa_2}.
			\end{align*}
			
		\end{enumerate}
		
		\noindent Here  $C_c$ denotes the reciprocal of the normalizing constant as given in (\ref{c_vmc}), and infinite series representations for the partial derivatives of $C_c$ are given in Remark~\ref{del_C_expr}. 
	\end{theorem}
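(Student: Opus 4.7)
The proof would proceed in close parallel to that of Theorem~\ref{thm_vms}, the essential new element being the extraction of $E[\sin\Theta \sin\Phi]$ from the cosine-model structure. As in the sine case, I would first invoke the translation invariance of the circular correlations and variances in $(\mu_1,\mu_2)$ to reduce without loss of generality to $\mu_1 = \mu_2 = 0$. Under this simplification the density is proportional to $\exp[\kappa_1 \cos\theta + \kappa_2 \cos\phi + \kappa_3 \cos(\theta-\phi)]$, which is invariant under the reflection $(\theta,\phi) \mapsto (-\theta,-\phi)$; hence $E[\sin\Theta] = E[\sin\Phi] = E[\sin\Theta \cos\Phi] = E[\cos\Theta \sin\Phi] = E[\sin\Theta \cos\Theta] = 0$, exactly as in the sine case.

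The next step is to differentiate $C_c$ under the integral sign to express the needed moments. Straightforward differentiation of the exponent in (\ref{vmc}) gives $\frac{1}{C_c} \frac{\partial C_c}{\partial \kappa_1} = E[\cos\Theta]$ (hence the formula for $\var(\Theta)$, and symmetrically for $\Phi$), $\frac{1}{C_c} \frac{\partial^2 C_c}{\partial \kappa_1^2} = E[\cos^2\Theta]$, and $\frac{1}{C_c} \frac{\partial^2 C_c}{\partial \kappa_1 \partial \kappa_2} = E[\cos\Theta \cos\Phi]$. The key observation for the cosine model is that differentiating in $\kappa_3$ brings down $\cos(\theta-\phi) = \cos\theta \cos\phi + \sin\theta \sin\phi$, so $\frac{1}{C_c} \frac{\partial C_c}{\partial \kappa_3} = E[\cos\Theta \cos\Phi] + E[\sin\Theta \sin\Phi]$, and rearranging yields
\[
E[\sin\Theta \sin\Phi] \;=\; \frac{1}{C_c} \left\{ \frac{\partial C_c}{\partial \kappa_3} - \frac{\partial^2 C_c}{\partial \kappa_1 \partial \kappa_2} \right\}.
\]
These moment identities would be packaged in the cosine-model analogue Proposition~\ref{prop_vmc_results} of the sine-model Proposition~\ref{vms_expec_res}, and the termwise differentiation of (\ref{c_vmc}) gives the infinite-series representations promised in Remark~\ref{del_C_expr}.

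With these moments in hand, the formulas for $\rho_{\js}$ and $\rho_{\fl}$ follow exactly as in the proof of Theorem~\ref{thm_vms}. For $\rho_{\js}$ I would substitute directly into (\ref{rho_js_defn}) using $E[\sin^2\Theta] = 1 - E[\cos^2\Theta]$ and the analogous expression for $\Phi$. For $\rho_{\fl}$ I would expand $\sin(\Theta_1-\Theta_2) \sin(\Phi_1-\Phi_2)$ and $\sin^2(\Theta_1-\Theta_2)$ via angle-subtraction identities, and then exploit the independence of the two copies together with the vanishing cross-expectations established by symmetry to collapse the numerator to $2 E[\sin\Theta \sin\Phi] \, E[\cos\Theta \cos\Phi]$ and each denominator expectation to $2 E[\cos^2\Theta](1 - E[\cos^2\Theta])$. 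Canceling the common factor of $2$ and substituting the expressions above yields the stated form.

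The main obstacle, and indeed the only genuine divergence from the sine-model argument, is the treatment of $E[\sin\Theta \sin\Phi]$. In the sine model, differentiating with respect to the ``covariance'' parameter $\lambda$ produces this moment directly; in the cosine model, $\kappa_3$ multiplies $\cos(\theta-\phi)$, which conflates $\sin\Theta \sin\Phi$ with $\cos\Theta \cos\Phi$. Disentangling the two via the subtraction $\partial_{\kappa_3} C_c - \partial_{\kappa_1 \kappa_2}^2 C_c$ is precisely what introduces the extra term in the numerators of both $\rho_{\fl}$ and $\rho_{\js}$ relative to their sine-model counterparts. Once this identity is established, everything else is a routine repetition of the sine-model computations.
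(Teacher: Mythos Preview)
Your proposal is correct and follows essentially the same route as the paper: reduce to $\mu_1=\mu_2=0$, differentiate $C_c$ under the integral sign to obtain the trigonometric moments (with the key subtraction $\partial_{\kappa_3}C_c - \partial^2_{\kappa_1\kappa_2}C_c$ isolating $E[\sin\Theta\sin\Phi]$), use symmetry to kill the mixed sine--cosine expectations, and substitute into the definitions of $\rho_{\js}$ and $\rho_{\fl}$. The only minor difference is that you dispatch the vanishing of $E[\sin\Theta\cos\Phi]$ etc.\ via the joint reflection $(\theta,\phi)\mapsto(-\theta,-\phi)$, whereas the paper's Proposition~\ref{prop_vmc_results}\ref{E_c_sin_x_cos_y} conditions on $\Theta$, uses the von Mises conditional of $\Phi$, and integrates an odd function against the symmetric marginal---your argument is a bit cleaner but the two are equivalent in effect.
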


    \begin{proof}
    	This proof closely resembles the proof of Theorem~\ref{thm_vms} for the most part. Without loss of generality, assume that $\mu_1 = \mu_2 = 0$.
    	\begin{enumerate}
    		\item 	
    		Note that, because $(\Theta_1, \Phi_1)$ $(\Theta_2, \Phi_2)$ are i.i.d.,
    		\begin{align*}
    		& \quad 	E\left[\sin (\Theta_1 - \Theta_2) \sin (\Phi_1 - \Phi_2) \right] \\
    		&\stackrel{(a)}{=} 2\: E \left(\sin \Theta_1 \sin \Phi_1 \right) E\left(\cos \Theta_1 \cos \Phi_1\right) \\
    		&\stackrel{(b)}{=} 2\: \left(\frac{1}{C_c} \left\{\frac{\partial C_c}{\partial \kappa_3} -  \frac{\partial^2 C_c}{\partial \kappa_1 \partial \kappa_2} \right\} \right) \left(\frac{1}{C_c} \frac{\partial^2 C_c}{\partial \kappa_1 \partial \kappa_2} \right) 
    		\end{align*}
    		with $(a)$ being a consequence of Proposition~\ref{vmc_expec_res}~\ref{E_c_sin_x_cos_y} and $(b)$ of Proposition~\ref{vmc_expec_res}~\ref{E_c_sin_x_sin_y} and \ref{E_c_cos_x_cos_y}. Also,
    		\begin{align*}
    	    E\left[\sin^2 \left(\Theta_1 - \Theta_2\right)\right] 
    		&\stackrel{(c)}{=} 2 \: E \left(\cos^2 \Theta_1 \right) E \left(\sin^2 \Theta_1 \right) \\ 
    		&= 2 \: E \left(\cos^2 \Theta_1 \right) \left[1 - E \left(\cos^2 \Theta_1 \right)\right] \\
    		&\stackrel{(d)}{=} 2 \left(\frac{1}{C_c} \frac{\partial^2 C_c}{\partial \kappa_1^2} \right) \left(1 - \frac{1}{C_c} \frac{\partial^2 C_c}{\partial \kappa_1^2} \right) 
    		\end{align*}
    		where $(c)$ and $(d)$  follows from Propositions~\ref{vmc_expec_res}~\ref{E_c_sin_x_cos_x} and \ref{vmc_expec_res}~\ref{E_c_cos2_x} respectively. Similarly,
    		\[
    		E\left[\sin^2 \left(\Phi_1 - \Phi_2\right)\right] =  \left(\frac{1}{C_c} \frac{\partial^2 C_c}{\partial \kappa_2^2} \right) \left(1 - \frac{1}{C_c} \frac{\partial^2 C_c}{\partial \kappa_2^2} \right).
    		\]
    		This completes the proof.

    		\item From Proposition~\ref{vmc_expec_res}~\ref{E_c_sin_x_sin_y} and  Proposition~\ref{vmc_expec_res}~\ref{E_c_cos2_x}
    		\begin{gather*}
    		E\left[\sin \Theta \sin \Phi \right] = \frac{1}{C_c} \frac{\partial C_c}{\partial \kappa_3} \\
    		E \left(\sin^2 \Theta \right) = 1 - \frac{1}{C_c} \frac{\partial^2 C_c}{\partial \kappa_1^2} \\ \text{ and } E \left(\sin^2 \Phi \right) = 1 - \frac{1}{C_c} \frac{\partial^2 C_c}{\partial \kappa_2^2}.
    		\end{gather*}
    		
    		The proof is completed by plugging these expressions into the formula.
    		
    		\item Follows from Proposition~\ref{vmc_expec_res}~\ref{E_c_cos_x} and the definition of circular variance.
    		
    	\end{enumerate}	
    	
    \end{proof}

		\begin{remark} \label{del_C_expr}
		Expressions for the partial derivatives for the von Mises sine and cosine normalizing constants as infinite series are provided below. The derivations are straightforward using formulas in \citet[\S 9.6]{abramowitz:1964} and are therefore, omitted. 
		
		\begin{enumerate}
			\item Sine model:
			\begin{align}
			\frac{\partial C_s}{\partial \kappa_1} &= 4 \pi^2  \sum_{m=0}^{\infty} \binom{2m}{m} \left(\frac{\kappa_3^2}{4\kappa_1 \kappa_2}\right)^m I_{m+1}(\kappa_1) I_m(\kappa_2) \label{del_C_k1_expr} \\
			\frac{\partial C_s}{\partial \kappa_2} &= 4 \pi^2  \sum_{m=0}^{\infty} \binom{2m}{m} \left(\frac{\kappa_3^2}{4\kappa_1 \kappa_2}\right)^m I_{m}(\kappa_1) I_{m+1}(\kappa_2) \label{del_C_k2_expr} \\
			\frac{\partial C_s}{\partial \kappa_3} &=  8 \pi^2  \sum_{m=1}^{\infty} m \binom{2m}{m} \frac{\kappa_3^{2m-1}}{(4\kappa_1 \kappa_2)^m} I_{m}(\kappa_1) I_{m}(\kappa_2) \label{del_C_lambda_expr} \\
			\frac{\partial^2 C_s}{\partial \kappa_1^2} &= 4 \pi^2  \sum_{m=0}^{\infty} \binom{2m}{m} \left(\frac{\kappa_3^2}{4\kappa_1 \kappa_2}\right)^m  \nonumber  \\
			&\qquad \qquad \left(\frac{I_{m+1}(\kappa_1)}{\kappa_1} + I_{m+2}(\kappa_1)\right) I_m(\kappa_2) \label{del_C_k1_k1_expr} \\
			\frac{\partial^2 C_s}{\partial \kappa_2^2} &= 4 \pi^2  \sum_{m=0}^{\infty} \binom{2m}{m} \left(\frac{\kappa_3^2}{4\kappa_1 \kappa_2}\right)^m \nonumber \\
			& \qquad \qquad  I_m(\kappa_1) \left(\frac{I_{m+1}(\kappa_2)}{\kappa_2} + I_{m+2}(\kappa_2)\right) \label{del_C_k2_k2_expr} \\
			\frac{\partial^2 C_s}{\partial \kappa_1 \: \partial \kappa_2} &= 4 \pi^2  \sum_{m=0}^{\infty} \binom{2m}{m} \left(\frac{\kappa_3^2}{4\kappa_1 \kappa_2}\right)^m  I_{m+1}(\kappa_1) I_{m+1}(\kappa_2) \label{del_C_k1_k2_expr}
			\end{align}
			
			Note that, repeated applications of L'Hospital's rule on the relationship $\frac{\partial I_m (x)}{\partial x} = \frac{1}{2} I_{m-1}(x) + \frac{1}{2} I_{m+1}(x)$ yields $\lim\limits_{x \rightarrow 0} \frac{I_m(x)}{x^m} = 2^{-m}$ for any integer $m \geq 0$ and $\lim\limits_{x \rightarrow 0} \frac{I_n(x)}{x^m} = 0$ for integers $n > m \geq 0$. Thus, when $\kappa_1$ and/or $\kappa_2$ is zero the above expressions remain valid, and can be further simplified.
			
			\item Cosine model:
			\begin{align}
			\frac{\partial C_c}{\partial \kappa_1} =  4\pi^2  & \left\{ I_1(\kappa_1) I_0(\kappa_2) I_0(\kappa_3)  +  \right. \nonumber \\
			&  \left. \qquad  \sum_{m = 1} ^\infty  I_m(\kappa_2) I_m(\kappa_3) \left[ I_{m+1}(\kappa_1) + I_{m-1}(\kappa_1) \right] \right\} \label{del_C_c_k1_expr} \\
			\frac{\partial C_c}{\partial \kappa_2} = 4\pi^2 & \left\{  I_0(\kappa_1) I_1(\kappa_2) I_0(\kappa_3)  +  \right. \nonumber \\
			&  \left. \qquad  \sum_{m = 1} ^\infty  I_m(\kappa_1) I_m(\kappa_3) \left[ I_{m+1}(\kappa_2) + I_{m-1}(\kappa_2) \right] \right\} \label{del_C_c_k2_expr} \\
			\frac{\partial C_c}{\partial \kappa_3} = 4\pi^2 & \left\{ I_0(\kappa_1) I_0(\kappa_2) I_1(\kappa_3)  +  \right. \nonumber \\ 
			&  \left. \qquad \sum_{m = 1} ^\infty  I_m(\kappa_1) I_m(\kappa_2) \left[ I_{m+1}(\kappa_3) + I_{m-1}(\kappa_3) \right] \right \rbrace.  \label{del_C_c_k3_expr}\\
			\frac{\partial^2 C_c}{\partial \kappa_1^2} = 2\pi^2 & \left\{  I_0(\kappa_2) I_0(\kappa_3)[I_0(\kappa_1) + I_2(\kappa_1)] + \right. \nonumber \\ 
			&  \left. \qquad  \sum_{m = 1} ^\infty I_{m}(\kappa_2) I_{m}(\kappa_3) [I_{m-2}(\kappa_1) + 2I_{m}(\kappa_1) + I_{m+2}(\kappa_1)]   \right\} \label{del_C_c_k1_k1_expr}\\ 
			\frac{\partial^2 C_c}{\partial \kappa_2^2} = 2\pi^2 & \left\{  I_0(\kappa_1) I_0(\kappa_3)[I_0(\kappa_2) + I_2(\kappa_2)] + \right. \nonumber \\ 
			&  \left. \qquad  \sum_{m = 1} ^\infty I_{m}(\kappa_1) I_{m}(\kappa_3) [I_{m-2}(\kappa_2) + 2I_{m}(\kappa_2) + I_{m+2}(\kappa_2)]   \right\} \label{del_C_c_k2_k2_expr}\\
			\frac{\partial^2 C_c}{\partial \kappa_1 \partial \kappa_2} = 2\pi^2 & \left\{ 2 I_1(\kappa_1) I_1(\kappa_2) I_0(\kappa_3) + \right. \nonumber \\ 
			&  \left. \sum_{m = 1} ^\infty  I_m(\kappa_3) \left[ I_{m+1}(\kappa_1) + I_{m-1}(\kappa_1) \right] \left[ I_{m+1}(\kappa_2) + I_{m-1}(\kappa_2) \right]\right\} \label{del_C_c_k1_k2_expr}
			\end{align}
			
		\end{enumerate}
	\end{remark}
	
	\begin{corollary} \label{cor_js_sign}
	$\sgn(\rho_{\js}) = \sgn(\kappa_3)$, where $\sgn (x)$ is the sign of a real number $x$ defined as $\sgn (x) = \one_{(0, \infty)} (x) - \one_{(-\infty, 0)} (x)$. In other words, the direction (sign) of the T-linear association between the two  coordinates in von Mises sine and cosine distributions, as depicted by the JS circular correlation coefficient, are determined by the sign of the associated ``covariance'' parameter.
	\end{corollary}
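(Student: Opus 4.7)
The plan is to reduce both claims to the sign of $E[\sin(\Theta-\mu_1)\sin(\Phi-\mu_2)]$, i.e.\ the numerator of $\rho_{\js}$ modulo the positive factor $1/C$, since the denominator $\sqrt{E[\sin^2(\Theta-\mu_1)]\,E[\sin^2(\Phi-\mu_2)]}$ is strictly positive for any nondegenerate bivariate von Mises. Without loss of generality I take $\mu_1=\mu_2=0$ throughout.

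For the sine model, reading the sign off the series (\ref{del_C_lambda_expr}) suffices: every coefficient $m\binom{2m}{m}I_m(\kappa_1)I_m(\kappa_2)/(4\kappa_1\kappa_2)^m$ is nonnegative (using the limits $I_m(x)/x^m\to 2^{-m}/m!$ from Remark~\ref{del_C_expr} when $\kappa_1$ or $\kappa_2$ vanishes), while each $\lambda^{2m-1}$ carries $\sgn(\lambda)$; thus $\sgn(\partial_\lambda C_s)=\sgn(\lambda)$, and division by $C_s>0$ preserves the sign.

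For the cosine model, combining (\ref{del_C_c_k3_expr}) and (\ref{del_C_c_k1_k2_expr}) via the Bessel identity $I_{r-1}(x)-I_{r+1}(x)=\frac{2r}{x}I_r(x)$ gives $\frac{1}{C_c}\bigl\{\partial_{\kappa_3}C_c-\partial^2_{\kappa_1\kappa_2}C_c\bigr\}=\frac{8\pi^2}{C_c\kappa_1\kappa_2}\sum_{r\ge 1}r^2 I_r(\kappa_1)I_r(\kappa_2)I_r(\kappa_3)$, which is \emph{not} term-by-term of one sign when $\kappa_3<0$ because $I_r(\kappa_3)$ then alternates in $r$. I would therefore condition on $\Theta=\theta$: the conditional density of $\Phi$ is proportional to $\exp\!\bigl[(\kappa_2+\kappa_3\cos\theta)\cos\phi+(\kappa_3\sin\theta)\sin\phi\bigr]=\exp\!\bigl[R(\theta)\cos(\phi-\beta(\theta))\bigr]$ with $R(\theta)=\sqrt{(\kappa_2+\kappa_3\cos\theta)^2+\kappa_3^2\sin^2\theta}$ and $\sin\beta(\theta)=\kappa_3\sin\theta/R(\theta)$, i.e.\ $\Phi\mid\Theta=\theta\sim\vm(\beta(\theta),R(\theta))$. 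Then $E[\sin\Phi\mid\Theta]=\sin\beta(\Theta)\,I_1(R(\Theta))/I_0(R(\Theta))$, and iterated conditioning yields
\[
E[\sin\Theta\sin\Phi]=\kappa_3\cdot E\!\left[\frac{\sin^2\Theta\cdot I_1(R(\Theta))}{R(\Theta)\,I_0(R(\Theta))}\right].
\]
The right-hand expectation is strictly positive because the integrand (continuously extended by $I_1(R)/R\to 1/2$ at $R=0$) is nonnegative everywhere and vanishes only on the null set $\{\sin\Theta=0\}$. Hence $\sgn E[\sin\Theta\sin\Phi]=\sgn(\kappa_3)$, and the corollary follows.

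The main obstacle is precisely the cosine case: the natural series expansion does not reveal the sign of the numerator once $\kappa_3<0$, particularly in the bimodal regime $\kappa_3<-\kappa_1\kappa_2/(\kappa_1+\kappa_2)$ where leading cancellations between successive terms are delicate. The conditional-expectation trick circumvents this by isolating the single factor of $\kappa_3$ appearing in $\sin\beta(\Theta)$, reducing the sign question to the manifest nonnegativity of the remaining integrand.
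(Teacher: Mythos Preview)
Your argument is correct. For the sine model it coincides with the paper's: both read the sign directly off the series~(\ref{del_C_lambda_expr}), where each term carries the factor $\lambda^{2m-1}$.

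For the cosine model your route is genuinely different. The paper rewrites $\partial_{\kappa_3}C_c-\partial^2_{\kappa_1\kappa_2}C_c$ as $\sum_{m\ge1}a_mI_m(\kappa_3)$ with $a_m=2\pi^2[I_{m-1}(\kappa_1)-I_{m+1}(\kappa_1)][I_{m-1}(\kappa_2)-I_{m+1}(\kappa_2)]>0$, and for $\kappa_3<0$ treats the resulting alternating series $\sum_{m\ge1}(-1)^ma_mI_m(|\kappa_3|)$ by pairing consecutive terms; this relies on the claim that $(a_m)_{m\ge1}$ is decreasing, which via the identity $I_{m-1}(x)-I_{m+1}(x)=\tfrac{2m}{x}I_m(x)$ amounts to monotonicity of $mI_m(\kappa)$ in $m$ and is not entirely innocent for large $\kappa_1,\kappa_2$. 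Your conditional-expectation identity
\[
E[\sin\Theta\sin\Phi]=\kappa_3\,E\!\left[\frac{\sin^2\Theta\;I_1(R(\Theta))}{R(\Theta)\,I_0(R(\Theta))}\right]
\]
sidesteps the series entirely: it isolates a single factor of $\kappa_3$ multiplied by a manifestly positive expectation, so the sign is immediate and uniform across the unimodal and bimodal regimes without any case analysis. The paper's approach has the virtue of remaining within the explicit Bessel-series framework used for numerical evaluation elsewhere in the article, but for the pure sign question your probabilistic argument is both more elementary and more robust.
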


    \begin{proof}
	  These results follow from Proposition~\ref{vms_expec_res}~\ref{E_sin_x_sin_y_sgn} and Proposition~\ref{vmc_expec_res}~\ref{E_c_sin_x_sin_y_sgn} and the definition of $\rho_{\js}$.
   \end{proof}

	 \begin{remark} \label{rem_vms_indep}
		As immediate consequences of Corollary~\ref{cor_js_sign}, it follows that $\rho_{\js} \lesseqgtr 0$ if and only if $\kappa_3 \lesseqgtr 0$. This in particular means $\rho_{\js} = 0$ implies $\kappa_3 = 0$, which in turn characterizes independence in the respective models. Thus, for both von Mises sine and cosine models \emph{uncorrelatedness (in the sense of \citet{jammalamadaka:1988}) implies independence}, which is analogous to a bivariate normal distribution. 
	\end{remark}

	\begin{corollary} \label{cor_fl_js_reln}
		For both von Mises sine and cosine models, 
		\[
		\rho_{\fl}= \delta\left(\cos (\Theta-\mu_1), \cos (\Phi-\mu_2)\right) \rho_{\js} 
		\]
		where $\delta(X, Y) = E(XY)/\sqrt{E(X^2)E(Y^2)}$. This, in particular, implies (via the Schwarz inequality) that $|\rho_{\fl}| \leq |\rho_{\js}|$  for both sine and cosine models.
	\end{corollary}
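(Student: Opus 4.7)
The plan is purely algebraic: divide the closed-form expressions for $\rho_{\fl}$ and $\rho_{\js}$ provided in Theorem~\ref{thm_vms} (resp.\ Theorem~\ref{thm_vmc}) and recognize the surviving ratio as $\delta(\cos(\Theta-\mu_1),\cos(\Phi-\mu_2))$. Without loss of generality take $\mu_1=\mu_2=0$, so that the quantity to identify is $\delta(\cos\Theta,\cos\Phi) = E(\cos\Theta\cos\Phi)/\sqrt{E(\cos^2\Theta)E(\cos^2\Phi)}$.

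For the sine model, inspecting the formulas in Theorem~\ref{thm_vms}(1)--(2) shows that the numerator of $\rho_{\fl}$ is the numerator of $\rho_{\js}$ multiplied by $\frac{1}{C_s}\frac{\partial^2 C_s}{\partial \kappa_1\partial\kappa_2}$, while the denominator of $\rho_{\fl}$ is the denominator of $\rho_{\js}$ multiplied by $\sqrt{\bigl(\tfrac{1}{C_s}\tfrac{\partial^2 C_s}{\partial \kappa_1^2}\bigr)\bigl(\tfrac{1}{C_s}\tfrac{\partial^2 C_s}{\partial \kappa_2^2}\bigr)}$. Hence, after cancellation,
\[
\frac{\rho_{\fl}}{\rho_{\js}}
=
\frac{\dfrac{1}{C_s}\dfrac{\partial^2 C_s}{\partial \kappa_1\partial\kappa_2}}
{\sqrt{\left(\dfrac{1}{C_s}\dfrac{\partial^2 C_s}{\partial \kappa_1^2}\right)\left(\dfrac{1}{C_s}\dfrac{\partial^2 C_s}{\partial \kappa_2^2}\right)}}.
\]
Invoking the identities $E(\cos\Theta\cos\Phi)=\tfrac{1}{C_s}\tfrac{\partial^2 C_s}{\partial\kappa_1\partial\kappa_2}$ and $E(\cos^2\Theta)=\tfrac{1}{C_s}\tfrac{\partial^2 C_s}{\partial\kappa_1^2}$, $E(\cos^2\Phi)=\tfrac{1}{C_s}\tfrac{\partial^2 C_s}{\partial\kappa_2^2}$ from Proposition~\ref{vms_expec_res}, the right-hand side is exactly $\delta(\cos\Theta,\cos\Phi)$, proving the claimed identity for the sine model.

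The cosine case is treated by an identical cancellation using Theorem~\ref{thm_vmc}(1)--(2): the extra factor that survives in the numerator is again $\tfrac{1}{C_c}\tfrac{\partial^2 C_c}{\partial\kappa_1\partial\kappa_2}$ (the $\tfrac{1}{C_c}\bigl\{\tfrac{\partial C_c}{\partial\kappa_3}-\tfrac{\partial^2 C_c}{\partial\kappa_1\partial\kappa_2}\bigr\}$ piece cancels against the numerator of $\rho_{\js}$), and the surviving denominator factor is $\sqrt{\bigl(\tfrac{1}{C_c}\tfrac{\partial^2 C_c}{\partial\kappa_1^2}\bigr)\bigl(\tfrac{1}{C_c}\tfrac{\partial^2 C_c}{\partial\kappa_2^2}\bigr)}$. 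Using the analogous expectation identities from Proposition~\ref{vmc_expec_res}, this ratio is again $\delta(\cos\Theta,\cos\Phi)$.

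For the inequality $|\rho_{\fl}|\le|\rho_{\js}|$, apply the Cauchy--Schwarz inequality to $X=\cos(\Theta-\mu_1)$ and $Y=\cos(\Phi-\mu_2)$: $|E(XY)|\le\sqrt{E(X^2)E(Y^2)}$, so $|\delta(X,Y)|\le 1$, and the bound follows. There is no substantive obstacle here; the only care required is bookkeeping, namely matching each factor in the $\rho_{\fl}$ formula with the corresponding expectation provided by Propositions~\ref{vms_expec_res} and \ref{vmc_expec_res}, which is precisely what Theorems~\ref{thm_vms} and \ref{thm_vmc} have already done.
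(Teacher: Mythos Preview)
Your proposal is correct and follows essentially the same approach as the paper: both proofs obtain the identity by dividing the expressions for $\rho_{\fl}$ and $\rho_{\js}$ from Theorems~\ref{thm_vms} and \ref{thm_vmc} and then identifying the surviving ratio via the expectation identities in Propositions~\ref{vms_expec_res} and \ref{vmc_expec_res}. The paper's proof is terser (one sentence), but yours spells out the same cancellation explicitly.
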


       \begin{proof} 
    	This is an immediate consequence of Theorems~\ref{thm_vms} and \ref{thm_vmc}, and the facts that $\frac{\partial^2 C_\alpha}{\partial \kappa_1 \partial \kappa_2} = E[\cos (\Theta-\mu_1) \cos (\Phi-\mu_2)]$, $\frac{\partial^2 C_\alpha}{\partial \kappa_1^2} = E[\cos^2(\Theta - \mu_1)]$ and $\frac{\partial^2 C_\alpha}{\partial \kappa_2^2} = E[\cos^2(\Phi - \mu_2)]$  ($\alpha = s, c$) for both von Mises sine and cosine models (see Proposition~\ref{vms_expec_res}~\ref{E_cos_x_cos_y},\ref{E_cos2_x} and \ref{vmc_expec_res}~\ref{E_c_cos_x_cos_y},\ref{E_c_cos2_x}).
    \end{proof}

	\begin{remark}\label{rem_fl_js_sign}
	 From Corollary~\ref{cor_fl_js_reln} it follows that $\rho_{\js}$ and $\rho_{\fl}$  have the same sign in von Mises sine and cosine models if and only if $E[\cos (\Theta-\mu_1) \cos (\Phi-\mu_2)] \geq 0$. Also, from Propositions~\ref{vms_expec_res}~\ref{E_cos_x_cos_y} and \ref{vmc_expec_res}~\ref{E_c_cos_x_cos_y}, we have $E[\cos (\Theta-\mu_1) \cos (\Phi-\mu_2)] = \frac{\partial^2 C_\alpha}{\partial \kappa_1 \partial \kappa_2}$ ($\alpha = s, c$)  for both sine and cosine models. Now, for the sine model, note that (see the infinite series representation (\ref{del_C_k1_k2_expr})) $\frac{\partial^2 C_s}{\partial \kappa_1 \partial \kappa_2} \geq 0$ for any $\kappa_1, \kappa_2, \kappa_3$ (and $\mu_1, \mu_2$). Thus, for the sine model the signs of   $\rho_{\js}$ and $\rho_{\fl}$ \emph{always agree} (although they may differ in magnitude). 
	 
	 In contrast, for the cosine model, $\frac{\partial^2 C_c}{\partial \kappa_1 \partial \kappa_2} \geq 0$ when $\kappa_3 \geq 0$ (see the infinite series representation (\ref{del_C_c_k1_k2_expr})), and hence the signs of $\rho_{\fl}$ and $\rho_{\js}$ are the same when $\kappa_3 \geq 0$. However, if $\kappa_3 < 0$ and $|\kappa_3|$ is large compared to $\kappa_1$ and $\kappa_2$,  $E[\cos (\Theta-\mu_1) \cos (\Phi-\mu_2)]$ can be negative, in which case $\rho_{\fl}$ and $\rho_{\js}$ will have opposite signs. In such cases, interpretations of the two correlation coefficients are not straightforward, especially when their magnitudes are high (see Section~\ref{sec_illus} for an example).  
	\end{remark}

	\begin{remark} \label{rem_sin_lambda_flip}
		Observe for the sine model that $|{\partial C_s}/{\partial \kappa_3}|$ (see (\ref{del_C_lambda_expr})), and hence $|\rho_{\fl}|$ and $|\rho_{\js}|$,  remain unchanged if the sign of $\kappa_3$ is flipped. This means, for fixed $\kappa_1$ and $\kappa_2$, reversing the sign of $\kappa_3$ just reverses the direction of association (as depicted by both $\rho_{\js}$ and $\rho_{\fl}$), while keeping the magnitude unchanged in the sine model. This can also be seen as a corollary to the fact that $(\Theta, \Phi) \sim \vms(\kappa_1, \kappa_2, \kappa_3, \mu_1, \mu_2)$ implies, and is implied by, $(\Theta, -\Phi) \sim \vms(\kappa_1, \kappa_2, -\kappa_3, \mu_1, \mu_2)$). However, this is not true in general for the cosine model. (See section~\ref{sec_illus} for examples).  
	\end{remark}

	As mentioned in Introduction, under certain conditions, both sine and cosine model densities closely approximate the normal density. In such cases, both $\rho_{\js}$ and $\rho_{\fl}$ are well approximated by the associated correlation parameter of the (approximate) normal distribution. The following two corollaries formally describe the situations where $\rho_{\fl}$ and $\rho_{\js}$ are approximately equal due to approximate normality of the sine and cosine model, and provide their common approximate values.

    \begin{corollary} \label{cor_circ_lin_reln_vmsin}
     Let $(\Theta, \Phi) \sim \vms(\kappa_1, \kappa_2, \kappa_3, \mu_1, \mu_2)$. If $\kappa_1$ and $\kappa_2$ are large and $\kappa_3^2 < \kappa_1 \kappa_2$, then $\rho_{\fl}(\Theta, \Phi) \approx \rho_{\js}(\Theta, \Phi) \approx \kappa_3/\sqrt{\kappa_1\kappa_2}$.   
    \end{corollary}
    
    \begin{proof}
    	Clearly, if $\kappa_3 = 0$, then $\Theta$ and $\Phi$ are independent, and hence $\rho_{\fl}(\Theta, \Phi) = \rho_{\js}(\Theta, \Phi) = 0 = \kappa_3/\sqrt{\kappa_1\kappa_2}$. So, without loss of generality, we assume $\kappa_3 \neq 0$.  From \citet[Proposition~2]{rivest:1988} and \citet[Section~2]{singh:2002} it follows that when $\kappa_1, \kappa_2$ are large and $\kappa_3^2 < \kappa_1 \kappa_2$, then $(\Theta, \Phi)$ have an approximately bivariate normal distribution with covariance matrix
    	\[
    	\Sigma = \frac{1}{\kappa_1\kappa_2 - \kappa_3^2}
    	\begin{pmatrix}
    	\kappa_2 & \kappa_3 \\
    	\kappa_3 & \kappa_1
    	\end{pmatrix}.
    	\]
    	Hence, in such cases, $\rho(\Theta, \Phi) \approx \frac{\kappa_3}{\sqrt{\kappa_1 \kappa_2}}$ from the dominated convergence theorem, where $\rho(X, Y)$ denotes the product moment correlation coefficient between $X$ and $Y$. Now, observe that when $\kappa_1 \kappa_2 > \kappa_3^2$, then both $A(\kappa_1)$ and $A(\kappa_2)$ are 
    	trivially bounded above by $\kappa_1 \kappa_2/\kappa_3^2$, where $A(x) = I_1(x)/I_0(x)$. This ensures unimodality of the marginal distributions of $\Theta$ and $\Phi$ \citep[Theorem~3]{singh:2002}. Furthermore, since $\kappa_1$ and $\kappa_2$ are large subject to $\kappa_1 \kappa_2 > \kappa_3^2$, the marginal distributions of $\Theta$ and $\Phi$ are highly concentrated (see, e.g., \cite[Proposition~2]{rivest:1988}). Therefore, it follows that $\rho_{\js}(\Theta, \Phi) \approx \rho(\Theta, \Phi)$ \citep[Theorem~2.1(f)]{jammalamadaka:1988} and $\rho_{\fl}(\Theta, \Phi) \approx \rho(\Theta, \Phi)$ \citep[property~(v) on p.~329]{fisher:1983}. This completes the proof. 
   \end{proof}

   \begin{corollary} \label{cor_circ_lin_reln_vmcos}
   	Let $(\Theta, \Phi) \sim \vmc(\kappa_1, \kappa_2, \kappa_3, \mu_1, \mu_2)$. If $\kappa_1$ and $\kappa_2$ are large and $\kappa_3 \geq - \kappa_1 \kappa_2 / (\kappa_1 + \kappa_2)$, then $\rho_{\fl}(\Theta, \Phi) \approx \rho_{\js}(\Theta, \Phi) \approx \kappa_3/\sqrt{(\kappa_1+\kappa_3)(\kappa_2+\kappa_3)}$.   
   \end{corollary}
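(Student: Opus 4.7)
The plan is to adapt the strategy used in the proof of Corollary~\ref{cor_circ_lin_reln_vmsin} to the cosine model: (i) dispose of the independence case trivially, (ii) establish approximate bivariate normality of $(\Theta,\Phi)$ with an explicit covariance matrix via a Laplace/Taylor argument, (iii) read off the associated Pearson correlation $\rho(\Theta,\Phi)$, and (iv) conclude using high-concentration approximations of $\rho_{\js}$ and $\rho_{\fl}$ by $\rho$.

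For step (i), if $\kappa_3 = 0$ the cosine density factorizes, so $\Theta$ and $\Phi$ are independent and $\rho_{\js}(\Theta,\Phi) = \rho_{\fl}(\Theta,\Phi) = 0 = \kappa_3/\sqrt{(\kappa_1+\kappa_3)(\kappa_2+\kappa_3)}$; hence I would assume $\kappa_3 \neq 0$. For step (ii), set $u = \theta - \mu_1$, $v = \phi - \mu_2$, and Taylor-expand the cosine-model exponent to second order about the origin:
\[
\kappa_1 \cos u + \kappa_2 \cos v + \kappa_3 \cos(u-v) \approx (\kappa_1 + \kappa_2 + \kappa_3) - \tfrac{1}{2}\bigl[(\kappa_1+\kappa_3)u^2 + (\kappa_2+\kappa_3)v^2 - 2\kappa_3 uv\bigr].
\]
The leading Gaussian has precision matrix
\[
P = \begin{pmatrix} \kappa_1 + \kappa_3 & -\kappa_3 \\ -\kappa_3 & \kappa_2 + \kappa_3 \end{pmatrix}, \qquad \det(P) = \kappa_1\kappa_2 + \kappa_3(\kappa_1+\kappa_2).
\]
The assumed condition $\kappa_3 \geq -\kappa_1\kappa_2/(\kappa_1+\kappa_2)$ is exactly the range on which the cosine density is unimodal with mode at $(\mu_1,\mu_2)$ \citep{mardia:2007}, and simultaneously makes $P$ positive semi-definite. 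For large $\kappa_1, \kappa_2$ the mass of $(\Theta,\Phi)$ concentrates near the mode, so the Taylor remainder is negligible and $(\Theta,\Phi)$ is approximately bivariate normal with covariance
\[
\Sigma = P^{-1} = \frac{1}{\kappa_1\kappa_2 + \kappa_3(\kappa_1+\kappa_2)} \begin{pmatrix} \kappa_2 + \kappa_3 & \kappa_3 \\ \kappa_3 & \kappa_1 + \kappa_3 \end{pmatrix},
\]
yielding Pearson correlation $\rho(\Theta,\Phi) \approx \kappa_3/\sqrt{(\kappa_1+\kappa_3)(\kappa_2+\kappa_3)}$.

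Step (iv) then mirrors the sine-model proof: under the stated hypothesis the marginal densities of $\Theta$ and $\Phi$ are unimodal and, for large $\kappa_1,\kappa_2$, highly concentrated around $\mu_1$ and $\mu_2$, so \citet[Theorem~2.1(f)]{jammalamadaka:1988} yields $\rho_{\js}(\Theta,\Phi) \approx \rho(\Theta,\Phi)$ and \citet[property (v), p.~329]{fisher:1983} yields $\rho_{\fl}(\Theta,\Phi) \approx \rho(\Theta,\Phi)$, whence the claim. The main obstacle will be step (ii): rigorously controlling the fourth-order Taylor remainder so that the Gaussian approximation is legitimate, and handling the boundary $\kappa_3 = -\kappa_1\kappa_2/(\kappa_1+\kappa_2)$ at which $P$ becomes singular and the approximating normal degenerates. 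I expect this to be a Laplace-type argument parallel to the sine-model analogue implicit in \citet[Proposition~2]{rivest:1988}, with the cosine precision matrix $P$ above in place of the sine one.
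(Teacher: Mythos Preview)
Your proposal is correct and follows essentially the same approach as the paper: the paper disposes of $\kappa_3=0$, invokes \citet[Proposition~2]{rivest:1988} and \citet[Theorem~1]{mardia:2007} to obtain the approximate bivariate normal with exactly the covariance matrix $\Sigma$ you derived, and then finishes by the same high-concentration arguments (citing the same properties of $\rho_{\js}$ and $\rho_{\fl}$) you outline in step (iv). Your explicit Taylor expansion in step (ii) is precisely the content of those cited results, so your concern about rigorously controlling the remainder is already absorbed into those references; the paper does not treat the boundary case $\kappa_3 = -\kappa_1\kappa_2/(\kappa_1+\kappa_2)$ separately either.
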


  \begin{proof}
  	Without loss of generality let $\kappa_3 \neq 0$.  From \citep[Proposition~2]{rivest:1988} and \citep[Theorem~1]{mardia:2007}, it follows that when $\kappa_1, \kappa_2$ are large and $\kappa_3 \geq - \kappa_1 \kappa_2 / (\kappa_1 + \kappa_2)$, then $(\Theta, \Phi)$ is approximately bivariate normal with covariance matrix
  	\[
  	\Sigma = \frac{1}{\kappa_1\kappa_2 + (\kappa_1+\kappa_2)\kappa_3}
  	\begin{pmatrix}
  	\kappa_2 + \kappa_3 & \kappa_3 \\
  	\kappa_3 & \kappa_1 + \kappa_3
  	\end{pmatrix}.
  	\]  
  	Consequently, the marginal distributions of $\Theta$ and $\Phi$ are unimodal (approximately univariate normal), and are highly concentrated (since $\kappa_1, \kappa_2$ are large). The proof is completed by using arguments similar to the proof of	Corollary~\ref{cor_circ_lin_reln_vmsin}. 
  \end{proof}


	\subsection{Illustrations} \label{sec_illus}
	
	We now provide numerical and visual illustrations of  the circular variance and the two circular correlation coefficients,  which depict the spread and the toroidal linear association  between the coordinates of bivariate von Mises random deviates. For the sine model, we consider $\mu_1 = \mu_2 = 0$ and $\kappa_1 = \kappa_2 = \kappa$, so that $\Theta$ and $\Phi$ have the same marginal distributions. We choose three sets of values for $\kappa$, one moderate (1), one small (0.1) and one large (10). For each set, we consider four different values of $\kappa_3$, namely, $\kappa/2, -\kappa/2, 2\kappa$ and $-2\kappa$. For each of these 12 combinations, we compute $\rho^s =  {\kappa_3}/{\sqrt{\kappa_1\kappa_2}}$,  $\rho_{\fl}$, $\rho_{\js}$ and $\var(\Theta) =\var(\Phi)$, using formulas provided in Theorem~\ref{thm_vms}. To note the accuracies of the formulas, we also compute Monte Carlo estimates $\hat\rho_{\fl}$, $\hat\rho_{\js}$ and $\hat \var(\Theta)$ along with their estimated standard errors,  on the basis of 100 replicated random samples of size 10,000 each, generated from a von Mises sine population for each respective combination of parameters, and compare the estimates with their true analytical counterparts. Analogous computations  are performed for the cosine model, with $\kappa_3$ replaced by $\kappa_3 = \kappa/2, -\kappa/2, 2\kappa, -2\kappa$, and $\rho^s$ replaced by $\rho^c=  \kappa_3/ \sqrt{(\kappa_1+\kappa_3) (\kappa_2+\kappa_3)}$.  The resulting values are shown in Tables~\ref{tab_vms} and \ref{tab_vmc}. All computations are done in R using the package \texttt{BAMBI} \citep{bambi_rpack}, in which we have incorporated functions for calculating circular variance and correlations, both theoretical (obtained from the analytical formulas), and estimated (from sample data matrices), along with functions for random simulation from these bivariate angular distributions.

	\begin{table}[ht]
	
		\tiny
		\centering
		\begin{tabular}{|rrrrrrrrrr|}
			
			\midrule
			$\kappa_1$ & $\kappa_2$ & $\kappa_3$ & $\rho^s$ & $\rho_\js$ & $\hat\rho_\js$ & $\rho_\fl$ & $\hat\rho_\fl$ & $\var(\Theta)$ & $\hat\var(\Theta)$\\ 
			\midrule	
		1 &   1 & 0.5 & 0.5 & 0.22 & 0.22 (0.0089) & 0.078 & 0.079 (0.0038) & 0.56 & 0.56 (0.0066) \\ 
		1 &   1 & -0.5 & -0.5 & -0.22 & -0.22 (0.0089) & -0.078 & -0.078 (0.0038) & 0.56 & 0.56 (0.0060) \\ 
		1 &   1 &   2 &   2 & 0.70 & 0.70 (0.0049) & 0.23 & 0.23 (0.0077) & 0.62 & 0.62 (0.0064) \\ 
		1 &   1 &  -2 &  -2 & -0.70 & -0.70 (0.0049) & -0.23 & -0.23 (0.0069) & 0.62 & 0.63 (0.0060) \\ 
		0.1 & 0.1 & 0.05 & 0.5 & 0.025 & 0.024 (0.010) & 0.00012 & 0.00010 (0.00026) & 0.95 & 0.95 (0.0070) \\ 
		0.1 & 0.1 & -0.05 & -0.5 & -0.025 & -0.025 (0.010) & -0.00012 & -0.000092 (0.00030) & 0.95 & 0.95 (0.0072) \\ 
		0.1 & 0.1 & 0.2 &   2 & 0.10  & 0.097 (0.0094) & 0.00054 & 0.00039 (0.0010) & 0.95 & 0.95 (0.0069) \\ 
		0.1 & 0.1 & -0.2 &  -2 & -0.10 & -0.098 (0.011) & -0.00054 & -0.00041 (0.0010) & 0.95 & 0.95 (0.0071) \\ 
		10 &  10 &   5 & 0.5 & 0.46 & 0.46 (0.0080) & 0.46 & 0.46 (0.0079) & 0.064 & 0.064 (0.00088) \\ 
		10 &  10 &  -5 & -0.5 & -0.46 & -0.46 (0.0073) & -0.46 & -0.45 (0.0073) & 0.064 & 0.064 (0.00097) \\ 
		10 &  10 &  20 &   2 & 0.98 & 0.98 (0.00030) & 0.89 & 0.89 (0.0017) & 0.49 & 0.49 (0.0020) \\ 
		10 &  10 & -20 &  -2 & -0.98 & -0.98 (0.00030) & -0.89 & -0.89 (0.0017) & 0.49 & 0.49 (0.0021) \\   
			\bottomrule
		\end{tabular}
		\caption{The true (analytical) correlations $\rho_{\js}$ and $\rho_{\fl}$ and variance $\var(\Theta)$ along with their sample estimates for the \textbf{von Mises sine model} for various choices of $\kappa_1, \kappa_2$ and $\kappa_3$. The numbers within the parentheses denote the standard errors of the associated Monte carlo estimates, and $\rho^s =  {\kappa_3}/{\sqrt{\kappa_1\kappa_2}}$.}
	
	   \label{tab_vms}
	\end{table}
	
	\begin{table}[ht]

		\centering
		\tiny
		\begin{tabular}{|rrrrrrrrrr|}
			
			\midrule
			$\kappa_1$ & $\kappa_2$ & $\kappa_3$ & $\rho^c$ &  $\rho_\js$ & $\hat\rho_\js$ & $\rho_\fl$ & $\hat\rho_\fl$ & $\var(\Theta)$ & $\hat\var(\Theta)$\\ 
			\midrule
			  1 &   1 & 0.5 & 0.33 & 0.21 & 0.21 (0.0098) & 0.12 & 0.12 (0.0056) & 0.48 & 0.48 (0.0057) \\ 
			1 &   1 & -0.5 &  -1 & -0.22 & -0.22 (0.010) & -0.025 & -0.025 (0.0026) & 0.64 & 0.64 (0.0061) \\ 
			1 &   1 &   2 & 0.67 & 0.61 & 0.61 (0.0062) & 0.52 & 0.52 (0.0057) & 0.37 & 0.37 (0.0050) \\ 
			1 &   1 &  -2 &  -2 & -0.68 & -0.68 (0.0071) & 0.37 & 0.37 (0.0062) & 0.84 & 0.84 (0.0065) \\ 
			0.1 & 0.1 & 0.05 & 0.33 & 0.025 & 0.024 (0.011) & 0.00075 & 0.00068 (0.00036) & 0.95 & 0.95 (0.0065) \\ 
			0.1 & 0.1 & -0.05 &  -1 & -0.025 & -0.025 (0.011) & 0.00049 & 0.00054 (0.00039) & 0.95 & 0.95 (0.0072) \\ 
			0.1 & 0.1 & 0.2 & 0.67 & 0.099 & 0.098 (0.010) & 0.010 & 0.010 (0.0013) & 0.95 & 0.95 (0.0068) \\ 
			0.1 & 0.1 & -0.2 &  -2 & -0.099 & -0.097 (0.012) & 0.0094 & 0.0095 (0.0015) & 0.95 & 0.95 (0.0070) \\ 
			10 &  10 &   5 & 0.33 & 0.33 & 0.33 (0.0083) & 0.33 & 0.33 (0.0083) & 0.038 & 0.038 (0.00050) \\ 
			10 &  10 &  -5 &  -1 & -0.65 & -0.64 (0.0051) & -0.62 & -0.62 (0.0050) & 0.15 & 0.15 (0.0019) \\ 
			10 &  10 &  20 & 0.67 & 0.67 & 0.67 (0.0051) & 0.67 & 0.67 (0.0051) & 0.030 & 0.030 (0.00044) \\ 
			10 &  10 & -20 &  -2 & -0.97 & -0.97 (0.0017) & 0.61 & 0.60 (0.0061) & 0.81 & 0.81 (0.0053) \\    
			\bottomrule
		\end{tabular}
		\caption{The true (analytical) correlations $\rho_{\js}$ and $\rho_{\fl}$ and variance $\var(\Theta)$ along with their sample estimates for the \textbf{von Mises cosine model} for various choices of $\kappa_1, \kappa_2$ and $\kappa_3$. The numbers within the parentheses denote the standard errors of the associated Monte carlo estimates, and  $\rho^c = \frac{\kappa_3}{\sqrt{(\kappa_1+\kappa_3)(\kappa_2+\kappa_3)}}$.}
		
		\label{tab_vmc}
	\end{table}
	
    The noticeable similarities between the true and the estimated values depicted in Tables~\ref{tab_vms} and \ref{tab_vmc} (together with the small standard errors) demonstrate the accuracies of the formulas. As expected, reversing the sign of $\kappa_3$ while keeping $\kappa_1$ and $\kappa_2$ unchanged has no impact on $\var(\Theta)$, and only reverses the signs of $\rho_{\js}$ and $\rho_{\fl}$ in the sine model (see Remark~\ref{rem_sin_lambda_flip}). This however does not generally hold for the cosine model. For both sine and cosine models, larger $\kappa_1$ and $\kappa_2$ values induce higher concentrations  when $\kappa_3$ is moderate, as reflected by the smaller variances.
    
    For both sine and cosine models, the numerical results show that for fixed $\kappa_1$, $\kappa_2$, increasing the ``covariance'' parameter in absolute value increases the magnitude of the (T-linear) association, as reflected in the $\rho_{\js}$ values. As expected, in each case $\sgn(\rho_{\js})$ is the same as  $\sgn(\kappa_3)$ (Corollary~\ref{cor_js_sign}) and $|\rho_{\fl}| \leq |\rho_{\js}|$ (Corollary~\ref{cor_fl_js_reln}). Note that $\rho_{\fl}$ and $\rho_{\js}$ are both close and well approximated by $\rho^s$ for the sine model in the case $\kappa_1 = \kappa_2 = 10$ (large) and $\kappa_3 = 5, -5$ (so that $\kappa_3^2 < \kappa_1 \kappa_2$), consistent with the result of Corollary~\ref{cor_circ_lin_reln_vmsin}.  A similar observation holds for the cosine model in the case $\kappa_1 = \kappa_2 = 10$ and $\kappa_3  = 5, 10$ (so that $\kappa_3 \geq - \kappa_1 \kappa_2 / (\kappa_1 + \kappa_2)$), see Corollary~\ref{cor_circ_lin_reln_vmcos}. It is interesting to note that the signs of the two circular correlations differ in the cosine model when $\kappa_3$ is very negative compared to $\kappa_1$ and $\kappa_2$, which corresponds a bimodal density (see \cite{mardia:2007}). For example, we see that when $\kappa_1 = \kappa_2 = 10$ and $\kappa_3 = -20$, we get $\rho_{\js} = -0.97$, while $\rho_{\fl} = 0.61$.

	\begin{figure}[!hptb]
		\centering
	\subfloat[$\kappa_3 = 0.5, \rho_\js = 0.22,  \rho_\fl = 0.078$]%
	{\includegraphics[width = 0.45\linewidth, height = 0.195\textheight,  page=1]{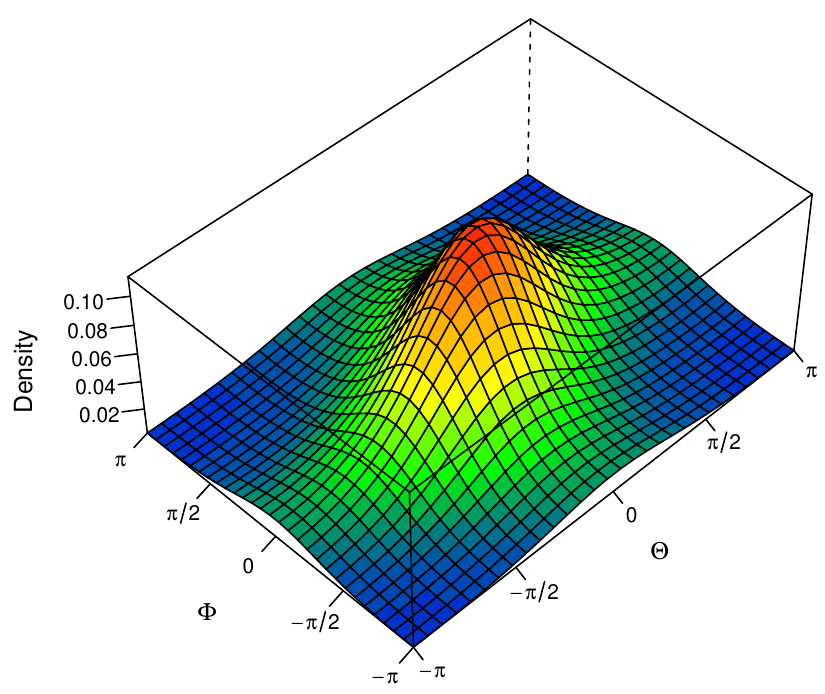}} 
	\hfill
	\subfloat[$\kappa_3 = 0.5,	\rho_\js = 0.21,  \rho_\fl = 0.12$]%
	{\includegraphics[width = 0.485\linewidth, height = 0.19\textheight,  page=1]{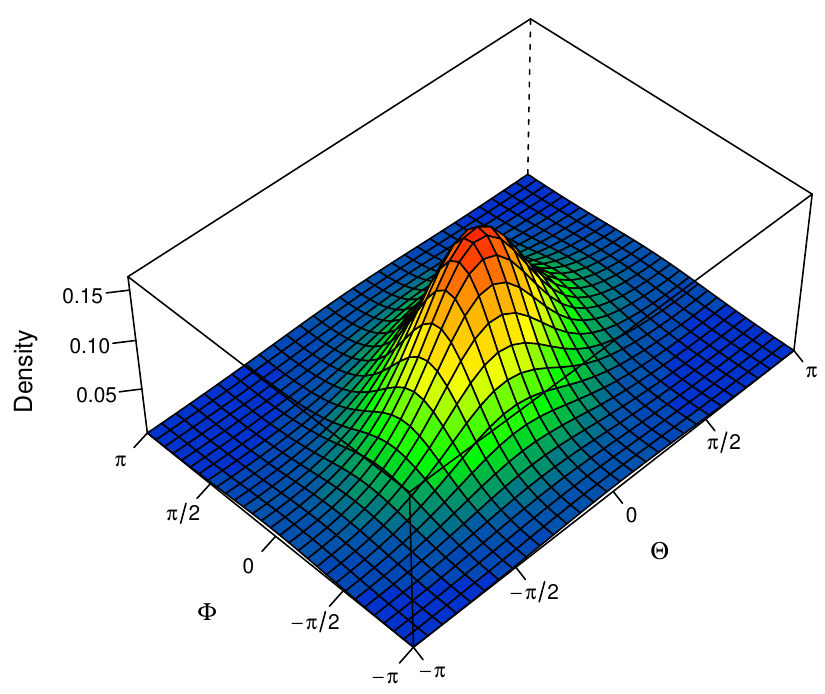}} \\
	\subfloat[$\kappa_3 = -0.5, \rho_\js = -0.22,  \rho_\fl = -0.078$]%
	{\includegraphics[width = 0.485\linewidth, height = 0.19\textheight,  page=2]{plots/vmsin_plots_cropped.pdf}} 
	\hfill
	\subfloat[$\kappa_3 = -0.5,	\rho_\js = -0.22,  \rho_\fl = -0.025$]%
	{\includegraphics[width = 0.485\linewidth, height = 0.19\textheight,  page=2]{plots/vmcos_plots_cropped.pdf}} \\
	\subfloat[$\kappa_3 = 2,	\rho_\js = 0.70,  \rho_\fl = 0.23$]
	{\includegraphics[width = 0.485\linewidth, height = 0.19\textheight,  page=3]{plots/vmsin_plots_cropped.pdf}} 
	\hfill
	\subfloat[$\kappa_3 = 2,	\rho_\js = 0.61,  \rho_\fl = 0.52$]%
	{\includegraphics[width = 0.485\linewidth, height = 0.19\textheight,  page=3]{plots/vmcos_plots_cropped.pdf}} \\
	\subfloat[$\kappa_3 = -2,	\rho_\js = -0.70,  \rho_\fl = -0.23$]%
	{\includegraphics[width = 0.485\linewidth, height = 0.19\textheight,  page=4]{plots/vmsin_plots_cropped.pdf}} 
	\hfill
	\subfloat[$\kappa_3 = -2,	\rho_\js = -0.68,  \rho_\fl = 0.37$]%
	{\includegraphics[width = 0.485\linewidth, height = 0.19\textheight,  page=4]{plots/vmcos_plots_cropped.pdf}}
	\caption{Density surfaces provide a visual assessment of circular correlations for von Mises sine (left) and cosine (right) models with parameters $\kappa_1 = \kappa_2 = 1$ and $\mu_1 = \mu_2 = 0$. There are four plots for each model, for each of $\kappa_3$ or $\kappa_3 \in \left\{0.5, -0.5, 2, 2\right\}$.} 
	\label{fig_vm_den_plots}	
	\end{figure}
	
	To visualize the two circular correlations and  how they describe the T-linear associations in the two models, we plot the density surfaces corresponding to the four parameter combinations $\kappa_1 = \kappa_2 = 1$ and $\kappa_3 = -0.5, 0.5, -2, 2$ (Figure~\ref{fig_vm_den_plots}). When $|\kappa_3|$ is small (0.5 or -0.5, Figure~\ref{fig_vm_den_plots}(a),(c)) in the sine model, or $\kappa_3$ is positive (2 or -2, Figure~\ref{fig_vm_den_plots}(b),(f)) in the cosine model, the respective densities are unimodal, and the direction of association matches the sign of the ``covariance'' parameter.  For the sine model, when $|\kappa_3|$ is large compared to $\kappa_1, \kappa_2$ ($\kappa_3 =$ 2 and -2, Figure~\ref{fig_vm_den_plots}(e),(g)), bimodality is induced  \citep{singh:2002, mardia:2007}.  Bimodality in cosine requires very negative $\kappa_3$ values compared to $\kappa_1$, $\kappa_2$ as seen in Figure~\ref{fig_vm_den_plots}(h).	Next, we see that reversing the sign of $\kappa_3$ for fixed $\kappa_1$ and $\kappa_2$ in the sine model simply reverses the direction of the association between $\Theta$ and $\Phi$, and the signs of $\rho_{\fl}$ and $\rho_{\js}$ always agree. The signs of $\rho_{\fl}$ and $\rho_{\js}$ also agree in the cosine model when $\kappa_3 > 0$. However, if $\kappa_3 < 0$, the association can be difficult to interpret, especially when the density is bimodal. For example, in Figure~\ref{fig_vm_den_plots}(d) where $\kappa_3 = -0.5$, a negative association is visible and both $\rho_{\fl}$ and $\rho_{\js}$ are negative. In contrast, Figure~\ref{fig_vm_den_plots}(h) shows an example of a bimodal density where $\rho_{\js}$ is  negative but $\rho_{\fl}$ is  positive -- visually, $\Theta$ and $\Phi$ are positively associated locally around the two peaks; however, their overall association is hard to interpret.

	\section{Inference for circular correlation coefficients from sample data}\label{sec:inference}
   	For a key practical use of the proposed circular correlation coefficient formulas, we consider the modeling of bivariate angular data using the von Mises sine or cosine distributions.  In this context of parametric modeling, we are interested in making formal statistical inferences on the JS and FL circular correlation coefficients between the random coordinates of the angle pair. We focus on likelihood-based frequentist inference using asymptotic normal distributions; corresponding Bayesian inference can be straightforwardly made using the presented formulas applied on posterior samples (e.g., obtained via MCMC \citep{bambi_rpack} or variational Bayes).  Note that such model-based inference is useful when a parametric model can be reasonably assumed for a given dataset; otherwise, the circular correlation coefficients can be estimated using their non-parametric formulas and associated theory for inference  \citep{fisher:1983, jammalamadaka:1988}. 
   	
   	
   	Let $\{(\Theta_i, \Phi_i): i = 1, \dots, n\}$ be random samples from a bivariate von Mises distribution (either sine or cosine variant) $f(\cdot \mid \etab)$ with parameter vector $\etab = (\mu_1, \mu_2, \kappa_1, \kappa_2, \kappa_3)$ belonging to the associated parameter space:
   	\[
   	\Omega = \{0 \leq \kappa_1 < \infty, 0 \leq \kappa_2 < \infty, -\infty < \kappa_3 < \infty, -\pi \leq \mu_1 \leq \pi, -\pi \leq \mu_2 \leq \pi \}.
   	\]
   	The maximum likelihood estimator $\hat\etab$ of $\etab$ the parameters is obtained by maximizing the log likelihood $l_n(\etab) := \sum_{i=1}^{n} \log f(\Theta_i, \Phi_i \mid \etab)$. Let $l_n'(\etab)$, $l_n''(\etab)$ and $l_n'''(\etab)$ denote the first, second and third derivatives of $l_n$ with respect to $\etab$ respectively (they all exist), $\etab_0$ be the true value of $\etab$, and $B(\etab_0)$ denote a neighborhood around $\etab_0$. It is straightforward to show that the following regularity conditions of Self and Liang \citep{self:liang:1987} holds \citep[see also][]{shieh:johnson:2005}:
   	\begin{enumerate}[label = (\roman*)]
   		\item The first three derivatives of $l(\etab)$ with respect to $\etab$ exists almost surely  on the intersection of $B(\etab_0)$ and $\Omega$,
   		
   		\item There exists a function $M((\Theta_1, \Phi_1), ..., (\Theta_n, \Phi_n))$ that bounds the absolute value of each entry of $l'''(\etab)$ and has $E_{\etab} (M) < \infty$ for all $\etab$ in the intersection of $B(\etab_0)$ and $\Omega$,  and
   		
   		\item The Fisher information matrix $I(\etab) := - E[\frac{\partial^2}{\partial \etab \partial \etab^T} \log f(\Theta_1, \Phi_1 \mid \etab)]$ is positive definite for all $\etab$ on $B(\etab_0)$ and $I(\etab_0)$ is equal to the covariance matrix of $n^{-1/2} l'_n (\etab_0)$.
   	\end{enumerate}
    
    \noindent Consequently, the maximum likelihood estimator $\hat \etab$ attains the asymptotic normal distribution:
    \[
    \sqrt{n} \left(\hat \etab -\etab_0\right) \xrightarrow{d} N_5(0, I(\etab_0)^{-1}) \text{ as } n \to \infty.
    \]
    The asymptotic distribution of the parametric circular coefficient estimate  $\hat\rho = \rho(\hat \etab)$ (either Fisher-Lee or Jamalamadaka-Sarma form) is then obtained using the delta method as:
    \[
    \sqrt{n} \left(\hat \rho - \rho_0\right) \xrightarrow{d} N\left(0, \nabla\rho(\etab_0)^T I(\etab_0)^{-1} \nabla\rho(\etab_0) \right)
    \]
    where $\rho_0 = \rho(\etab_0)$ is the true value of $\rho$ and $\nabla\rho(\etab_0) = \frac{\partial}{\partial \etab} \rho(\etab) |_{\etab_0 = \etab_0}$. (Note that the elements of $\nabla\rho(\etab_0)$ corresponding to $\mu_1$ and $\mu_2$ are both zero.) The sample analogue $\hat I := -n^{-1} \sum_{i=1}^n \left. l''(\etab)\right|_{\etab = \hat\etab}$ of the population Fisher information matrix $I(\etab)$  together with the above asymptotic distribution thus yields via Slutsky's theorem the following approximate normal distribution of the maximum likelihood estimate $\hat\rho$:
    \[
    \hat \rho \stackrel{a}{\sim} N \left(\rho_0, \frac{V}{n} \right), \text{ where } V = \nabla\rho(\hat \etab)^T \hat I^{-1} \nabla\rho(\hat \etab)
    \] 
    for sufficiently large $n$. This approximate distribution serves as a basis for making inference on the presented parametric circular correlation coefficients.
    
    To aid comparison, we also consider inference for the circular correlation coefficients based on their non-parametric sample estimates and the associated asymptotic normal distributions described in \citep{fisher:1983, jammalamadaka:1988}. It is to be noted that  the non-parametric estimate of the Fisher-Lee circular correlation coefficient is asymptotically biased when the population correlation is non-zero; the authors suggest using the jackknife for de-biasing and estimation of asymptotic standard error in general. We elected to use the simplified approximate standard error formula provided in \cite{fisher:1983} that is asymptotically correct when the population correlation coefficient is zero, for fair comparison with the maximum likelihood and non-parametric Jammalamadaka-Sarma estimators which do not require computationally-intensive methods for approximate estimation of standard error.

    \subsection{Simulation Study}
     We conducted simulation experiments to understand the effectiveness of  formula-based parametric estimates and the non-parametric  estimates of the Fisher-Lee and the Jammalamadaka-Sarma correlation coefficients in practice. We generated 1000 replicated datasets with sample sizes $n \in \{50, 100, 500, 1000\}$ separately from the von Mises sine and cosine distributions with parameters $\kappa_1 = \kappa_2 = 1$, $\mu_1 = \mu_2 = 0$ and $\kappa_3 \in \{-2, -0.5, 0, 0.5, 2\}$. In each generated dataset we obtained the parametric maximum likelihood and non-parametric estimates of $\rho_{\fl}$ and $\rho_{\js}$, computed the associated approximate standard errors obtained from the asymptotic normal distribution, and finally obtained an approximate 95\% confidence interval of the form 
     $$
     \mbox{estimate} \pm 1.96 \times \mbox{standard error}
     $$
     based on each estimated correlation coefficient. For each model (von Mises sine or cosine), $\kappa_3$, and sample size $n$, this yielded 1000 replicates of  approximate 95\% confidence intervals for $\rho_{\fl}$ and $\rho_{\js}$. The \textit{frequentist coverages} of these confidence intervals, i.e.,  the proportions of replicates where the computed confidence intervals contain the actual population correlation coefficients,  were subsequently obtained. Note that the purpose of this simulation is to demonstrate the usefulness of the presented formulas in applications where the parametric modeling is reasonable; as such, the behaviors of the maximum likelihood estimators under model misspecification is not considered here. 
    
     Tables~\ref{tab:vmsin_coverage} and \ref{tab:vmcos_coverage} display the frequentist coverages of the approximate 95\% confidence intervals for von Mises sine and cosine model respectively. For both models, coverages of the confidence intervals associated with the maximum likelihood estimators are moderately adequate for $n = 50$, and are adequate for $n \geq 100$. The non-parametric estimates for $\rho_{\js}$  show adequate coverages for $n \geq 100$ in the sine model, and in the cosine model with $\kappa_3 \neq -2$; when $\kappa_3 = -2$ the coverage is poor in cosine model even when the sample size is as large as 1000. This is not particularly surprising as the cosine distribution is very bimodal when $\kappa_3 = -2$ with two positively associated local clusters around the ``off diagonal'' line (see Figure~\ref{fig_vm_den_plots}(h) and the corresponding discussion in Section~\ref{sec_illus});  a  much larger sample size is thus seemingly required for an accurate representation of the population, and hence an accurate estimation of $\rho_{\js}$ in this setting. Note that the confidence intervals based on the  maximum likelihood estimate has adequate coverages, demonstrating the utility of parametric knowledge in such cases. 
    
	\begin{table}[htbp]
		\centering
		\begin{tabular}{|c|c|r|rrrrr|}
			\toprule
			Param. & Estimate & \multicolumn{1}{c|}{$\kappa_3$} & \multicolumn{1}{c}{$n = 50$} & \multicolumn{1}{c}{$n = 100$} & \multicolumn{1}{c}{$n = 500$} & \multicolumn{1}{c}{$n = 1000$} & \multicolumn{1}{c|}{$n = 5000$} \\
			\midrule
			\multirow{10}[4]{*}{$\rho_{\fl}$} & \multirow{5}[2]{*}{MLE} & -2    & 0.936 & 0.945 & 0.953 & 0.942 & 0.948 \\
			&       & -0.5  & 0.933 & 0.950 & 0.950 & 0.950 & 0.947 \\
			&       & 0     & 0.949 & 0.939 & 0.955 & 0.954 & 0.945 \\
			&       & 0.5   & 0.929 & 0.946 & 0.960 & 0.949 & 0.953 \\
			&       & 2     & 0.946 & 0.947 & 0.956 & 0.963 & 0.939 \\
			\cmidrule{2-8}          & \multirow{5}[2]{*}{Non-param.} & -2    & 0.593 & 0.616 & 0.618 & 0.624 & 0.632 \\
			&       & -0.5  & 0.873 & 0.907 & 0.932 & 0.924 & 0.911 \\
			&       & 0     & 0.909 & 0.935 & 0.948 & 0.951 & 0.944 \\
			&       & 0.5   & 0.883 & 0.888 & 0.918 & 0.918 & 0.932 \\
			&       & 2     & 0.652 & 0.652 & 0.629 & 0.648 & 0.617 \\
			\midrule
			\multirow{10}[4]{*}{$\rho_{\js}$} & \multirow{5}[2]{*}{MLE} & -2    & 0.932 & 0.938 & 0.942 & 0.945 & 0.942 \\
			&       & -0.5  & 0.911 & 0.938 & 0.950 & 0.947 & 0.949 \\
			&       & 0     & 0.906 & 0.930 & 0.948 & 0.953 & 0.945 \\
			&       & 0.5   & 0.911 & 0.940 & 0.955 & 0.947 & 0.961 \\
			&       & 2     & 0.912 & 0.935 & 0.959 & 0.950 & 0.954 \\
			\cmidrule{2-8}          & \multirow{5}[2]{*}{Non-param.} & -2    & 0.913 & 0.934 & 0.938 & 0.934 & 0.945 \\
			&       & -0.5  & 0.924 & 0.933 & 0.950 & 0.948 & 0.952 \\
			&       & 0     & 0.918 & 0.931 & 0.952 & 0.955 & 0.944 \\
			&       & 0.5   & 0.923 & 0.939 & 0.960 & 0.947 & 0.960 \\
			&       & 2     & 0.880 & 0.932 & 0.946 & 0.948 & 0.951 \\
			\bottomrule
		\end{tabular}%
		\caption{Frequentist coverages of approximate 95\% confidence intervals based on maximum likelihood and non-parametric estimates of Fisher-Lee and Jammalamadaka-Sarma correlation coefficients for von Mises sine model with $\kappa_1 = \kappa_2 = 1$, $\mu_1 = \mu_2 = 0$ and various $\kappa_3$. Sample sizes $\{n\}$ are displayed along the columns, and the numbers inside the cells display the observed coverages of the associated approximate confidence intervals obtained from replicated simulated datasets.} 
				\label{tab:vmsin_coverage}
	\end{table}%

	\begin{table}[htbp]
		\centering
		\begin{tabular}{|c|c|r|rrrrr|}
			\toprule
			Param. & Estimate & \multicolumn{1}{c|}{$\kappa_3$} & \multicolumn{1}{c}{$n = 50$} & \multicolumn{1}{c}{$n = 100$} & \multicolumn{1}{c}{$n = 500$} & \multicolumn{1}{c}{$n = 1000$} & \multicolumn{1}{c|}{$n = 5000$} \\
			\midrule
			\multirow{10}[4]{*}{$\rho_{\fl}$} & \multirow{5}[2]{*}{MLE} & -2    & 0.922 & 0.941 & 0.952 & 0.946 & 0.949 \\
			&       & -0.5  & 0.996 & 0.991 & 0.957 & 0.959 & 0.937 \\
			&       & 0     & 0.958 & 0.931 & 0.946 & 0.950  & 0.940 \\
			&       & 0.5   & 0.925 & 0.938 & 0.937 & 0.947 & 0.957 \\
			&       & 2     & 0.921 & 0.940  & 0.949 & 0.968 & 0.959 \\
			\cmidrule{2-8}          & \multirow{5}[2]{*}{Non-param.} & -2    & 0.147 & 0.146 & 0.121 & 0.118 & 0.125 \\
			&       & -0.5  & 0.883 & 0.902 & 0.942 & 0.940  & 0.946 \\
			&       & 0     & 0.909 & 0.935 & 0.948 & 0.951 & 0.944 \\
			&       & 0.5   & 0.878 & 0.881 & 0.884 & 0.889 & 0.91 \\
			&       & 2     & 0.963 & 0.947 & 0.964 & 0.979 & 0.962 \\
			\midrule
			\multirow{10}[4]{*}{$\rho_{\js}$} & \multirow{5}[2]{*}{MLE} & -2    & 0.893 & 0.939 & 0.957 & 0.955 & 0.955 \\
			&       & -0.5  & 0.914 & 0.944 & 0.942 & 0.952 & 0.956 \\
			&       & 0     & 0.919 & 0.921 & 0.943 & 0.948 & 0.939 \\
			&       & 0.5   & 0.934 & 0.939 & 0.937 & 0.944 & 0.951 \\
			&       & 2     & 0.903 & 0.937 & 0.958 & 0.962 & 0.954 \\
			\cmidrule{2-8}          & \multirow{5}[2]{*}{Non-param.} & -2    & 0.394 & 0.500   & 0.705 & 0.760  & 0.880 \\
			&       & -0.5  & 0.929 & 0.948 & 0.932 & 0.945 & 0.943 \\
			&       & 0     & 0.918 & 0.931 & 0.952 & 0.955 & 0.944 \\
			&       & 0.5   & 0.924 & 0.947 & 0.949 & 0.949 & 0.961 \\
			&       & 2     & 0.896 & 0.906 & 0.948 & 0.950  & 0.941 \\
			\bottomrule
		\end{tabular}%
		\caption{Frequentist coverages of approximate 95\% confidence intervals based on maximum likelihood and non-parametric estimates of Fisher-Lee and Jammalamadaka-Sarma correlation coefficients for von Mises cosine model with $\kappa_1 = \kappa_2 = 1$, $\mu_1 = \mu_2 = 0$ and various $\kappa_3$. Sample sizes $\{n\}$ are displayed along the columns, and the numbers inside the cells display the observed coverages of the associated approximate confidence intervals obtained from replicated simulated datasets.} 
				\label{tab:vmcos_coverage}%
	\end{table}%

     It is to be noted that confidence intervals based on the non-parametric estimate of $\rho_{\fl}$ in both models have largely inadequate coverages for all sample sizes, unless $\kappa_3 = 0$. This is unsurprising as the estimate and the form of the approximate standard error used in our study are asymptotically biased for $\kappa_3 \neq 0$ as noted in \citep{fisher:1983}, and therefore requires bias adjustment prior to construction of confidence intervals in general.

\subsection{Real data example}
	
    We now apply the inference procedures to a sample of real bivariate angular data from a cell biophysics experiment.  In \citet{lan2018integrating}, the authors studied the cell migration characteristics of NIH 3T3 fibroblasts.  In doing so, individual cells were prepared and monitored via fluorescence microscopy on 2-D substrate over a period of 1 hour.  For each cell, images were captured at one minute intervals, from which quantitative cell features were extracted.  Here, we consider two of those features:  (i) the centroid of the cell, (ii) the centroid of the associated nucleus within the cell.  Both of these may be represented as 2-D coordinate pairs.  To gain insight into cell migration patterns, \citet{lan2018integrating} calculated the movement angles from these cell and nucleus coordinates.  Specifically, for an individual cell we have cell centroid coordinates $(x^c_t, y^c_t)$ and nucleus centroid coordinates $(x^n_t, y^n_t)$ recorded at $t= 0, 1, \ldots, 60$ minutes.  Then the angles
	$$
    \theta^c_t = \arctan \left[\frac{y^c_{t} - y^c_{t-1}}{x^c_{t} - x^c_{t-1}}\right],  \mbox{~~~~}   \theta^n_t = \arctan \left[\frac{y^n_{t} - y^n_{t-1}}{x^n_{t} - x^n_{t-1}}\right]
    $$
	were calculated for $t=1, \ldots, 60$.    

	To provide examples, the resulting scatterplots of the pairs $(\theta^c_t, \theta^n_t)$ for two individual cells are plotted in Figure \ref{fig_realdata_scatter}.  From visual examination, example cell A has an apparent slight positive circular correlation, while example cell B has a stronger positive circular correlation, recalling that the edges of the plot wrap around.  We focus on these two datasets for further illustration.  First, we fit the von Mises sine and cosine models to each dataset using maximum likelihood.  Then, we compute the estimates of the JS and FL correlation coefficients along with approximate 95\% confidence intervals, using the fitted parametric models and the corresponding non-parametric estimates.  Finally, we use the log-likelihood values to determine which parametric model is preferred, and simulate from the fitted parametric model to informally corroborate  goodness-of-fit of the assumed model.

	\begin{figure}[!hptb]
	\centering
	\subfloat[Example cell A]%
	{\includegraphics[width = 0.5\linewidth]{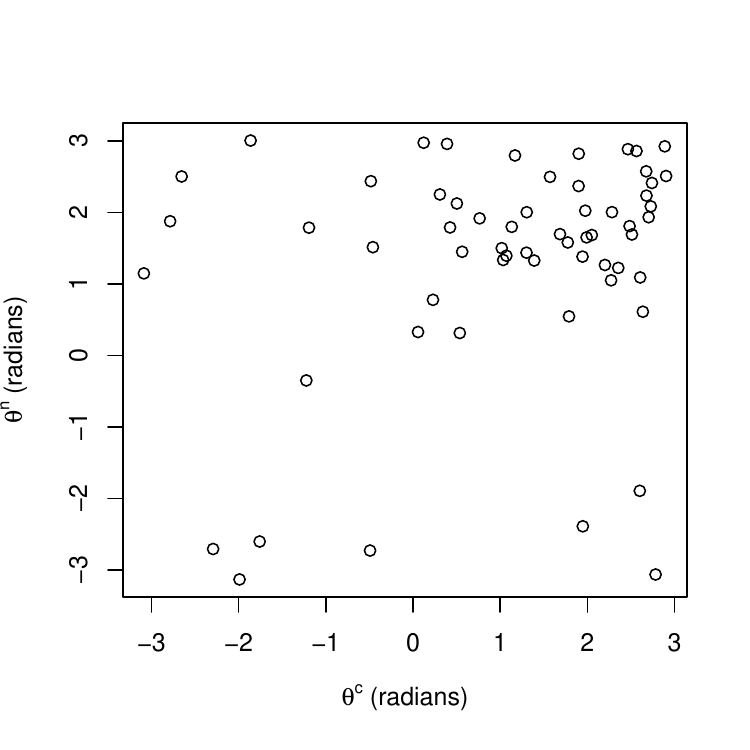}} 
	\hfill
	\subfloat[Example cell B]%
	{\includegraphics[width = 0.5\linewidth]{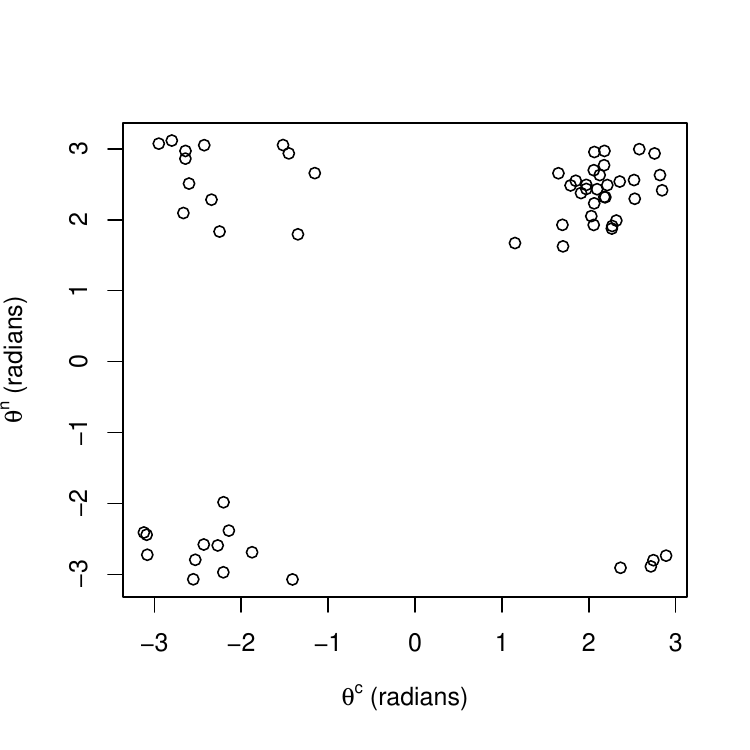}}
	\caption{Scatterplots of cell and nucleus movement angle pairs for two individual NIH 3T3 cells.} 
	\label{fig_realdata_scatter}	
\end{figure}

	The estimates of the circular correlation coefficients and associated 95\% confidence intervals are provided in Table~\ref{tab:realdata_ests} for these two datasets.  We may observe that the circular correlations are indeed stronger and more positive for cell B than cell A, and that the estimates in each case satisfy $\hat{\rho}_{\js} \ge \hat{\rho}_{\fl}$ (Corollary \ref{cor_fl_js_reln}).  For ${\rho}_{\js}$, the three point estimates roughly agree for both datasets, while it can be noted that the MLE-based confidence intervals are narrower than the non-parametric ones.  The same is true for ${\rho}_{\fl}$ in example cell B; for ${\rho}_{\fl}$ in example cell A, the non-parametric CI is slightly narrower, though may be subject to some undercoverage as suggested by the simulation studies in Tables~\ref{tab:vmsin_coverage} and \ref{tab:vmcos_coverage}.  The values of the log-likelihood at the MLEs (shown in the `log-lik' column of Table~\ref{tab:realdata_ests}) suggest that the cosine model may be a somewhat better fit than the sine model for cell A, while the sine model may be a better fit than the cosine model for cell B.  Since the sine and cosine models each have five parameters, model comparisons via AIC or BIC yield the same results:  the cosine model is preferred for cell A by a difference of 4.3 in AIC (or BIC), and the sine model is preferred for cell B by a difference of 3.3 in AIC (or BIC).  To corroborate the goodness-of-fit for these parametric models, a dataset of the same size (i.e., 60 angle pairs) is simulated from the best-fit model for each dataset and plotted in Figure \ref{fig_fitted_scatter}; the simulated datasets indeed largely resemble the real data.
	
    	\begin{table}[htbp]
    	\centering
    	\begin{tabular}{|c|c|c|cc|cc|}
    		\toprule
    		Dataset & Model & \multicolumn{1}{c|}{log-lik} & $\hat{\rho}_{\fl}$ & 95\% CI & $\hat{\rho}_{\js}$ & 95\% CI \\
    		\midrule
    		\multirow{3}{*}{Cell A} & Sine & -178.5 & 0.13 & (0.01, 0.26) & 0.28 & (0.03, 0.53) \\
    		& Cosine & -176.3 & 0.18 & (0.04, 0.32) & 0.31 & (0.12, 0.50) \\
    		& Non-param. & -- & 0.15 & (0.03, 0.27) & 0.25 & (0.00, 0.50) \\
    		\midrule
    		\multirow{3}{*}{Cell B} & Sine & -132.6 & 0.45 & (0.30, 0.59) & 0.58 & (0.41, 0.74) \\
			& Cosine & -134.3 & 0.41 & (0.26, 0.57) & 0.51 & (0.34, 0.69) \\
			& Non-param. & -- & 0.45 & (0.24, 0.66) & 0.53 & (0.34, 0.72) \\    		
    		\bottomrule
    	\end{tabular}%
    	\caption{Estimates and approximate 95\% confidence intervals from maximum likelihood and non-parametric estimates of the Fisher-Lee and Jammalamadaka-Sarma correlation coefficients for the two cell datasets presented in Figure \ref{fig_realdata_scatter}. For the sine and cosine models, the maximum log-likelihood attained at the parameter estimates is shown in the `log-lik' column.} 
    	\label{tab:realdata_ests}%
    \end{table}%

	\begin{figure}[!hptb]
	\centering
	\subfloat[Example cell A: cosine model]%
	{\includegraphics[width = 0.5\linewidth]{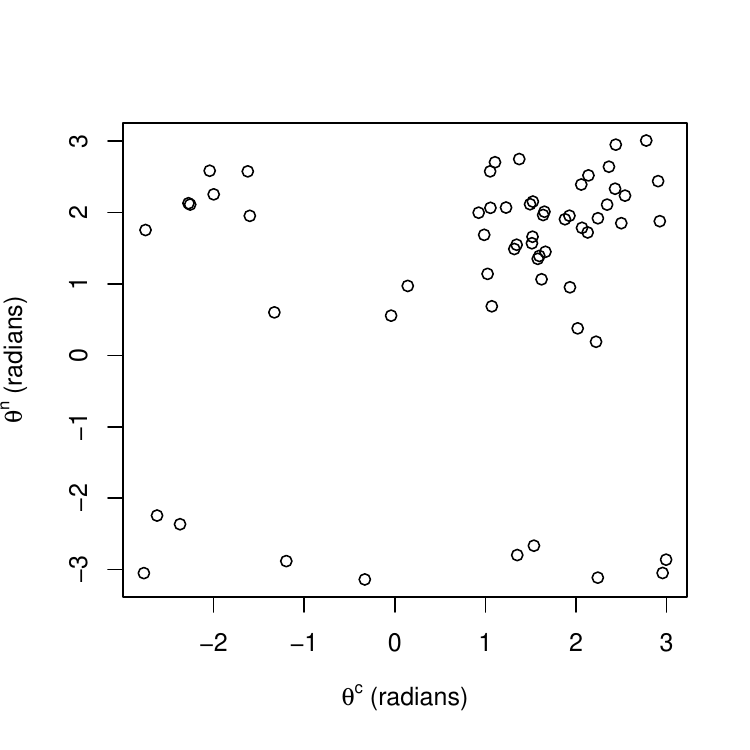}} 
	\hfill
	\subfloat[Example cell B: sine model]%
	{\includegraphics[width = 0.5\linewidth]{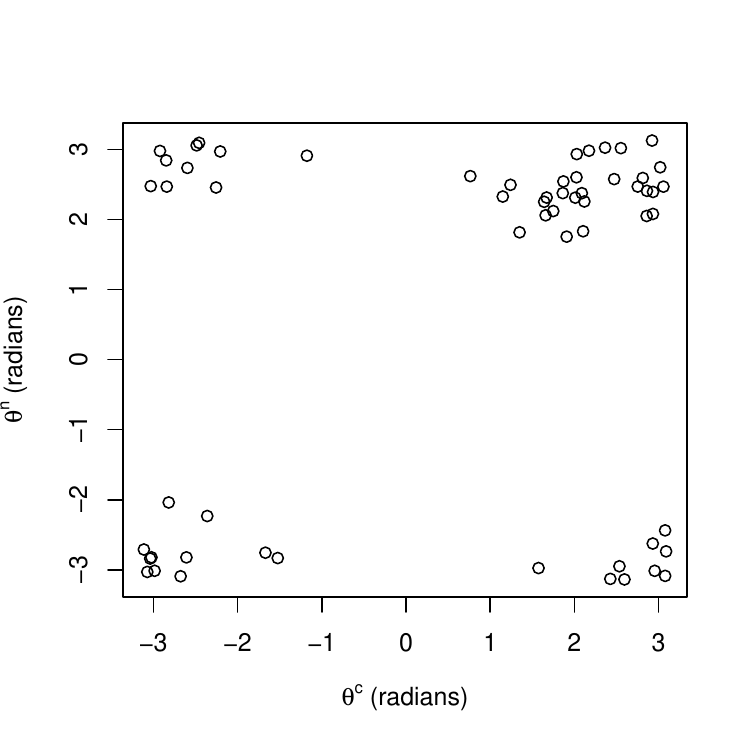}}
	\caption{Scatterplots of data simulated from the best-fit von Mises models for the two NIH 3T3 cells.} 
	\label{fig_fitted_scatter}	
\end{figure}

\section{Conclusion}
This paper studied the Jammalamadaka-Sarma and Fisher-Lee circular correlation coefficients for the von Mises sine and cosine models.  The theoretical properties of these coefficients were first considered;  then, their estimation from data via likelihood-based inference was presented.  The performance of the MLEs was illustrated via simulation studies and real data examples, and compared with the non-parametric counterparts of these circular correlation coefficients.  When bivariate angular data may be reasonably described by a von Mises sine or cosine model, then the MLE-based correlation coefficient estimates are more efficient.  Implementations of all computations are included in our R package \texttt{BAMBI}.

We note some directions for future work. First, both the simulation experiment and the real data analysis performed in this paper study the behaviors of the parametric correlation coefficient estimates when the respective von Mises models explain the data reasonably well. Thus, future research could investigate the effect of possible model misspecification on these parametric correlation coefficient estimates. On that note, goodness-of-fit tests for bivariate angular distributions, required to formally assess whether the von Mises models are reasonable fits to a given dataset, have not been addressed in the literature so far to the best of our knowledge.  Thus, a second possible avenue for future research could involve methodological investigation of formal goodness-of-fit tests for such bivariate angular data. 
	
	\begin{appendices}
	\section{Technical results required in the proofs of Theorem~\ref{thm_vms} and \ref{thm_vmc}}
	  \label{appen_tech_results}
		
		\begin{prop} \label{prop_vms_results}
			Let $(\Theta, \Phi) \sim \vms(\kappa_1, \kappa_2, \kappa_3, 0, 0)$. Then
			\begin{enumerate}[label = (\roman*)]
				\item \label{E_sin_x_sin_y} $E\left(\sin \Theta \sin \Phi \right) = \frac{1}{C_s} \frac{\partial C_s}{\partial \kappa_3}$.
				\item \label{E_sin_x_sin_y_sgn} $\sgn( E(\sin \Phi \sin \Theta)) = \sgn (\kappa_3)$.
				\item \label{E_cos_x_cos_y} $E\left(\cos \Theta \cos \Phi \right) = \frac{1}{C_s} \frac{\partial^2 C_s}{\partial \kappa_1 \partial \kappa_2}$.
				\item \label{E_cos_x} $E\left(\cos \Theta \right) = \frac{1}{C_s} \frac{\partial C_s}{\partial \kappa_1}$, and $E\left(\cos \Phi\right) = \frac{1}{C_s} \frac{\partial C_s}{\partial \kappa_2}$.
				\item \label{E_cos2_x} $E\left(\cos^2 \Theta \right) = \frac{1}{C_s} \frac{\partial^2 C_s}{\partial \kappa_1^2}$, and $E\left(\cos^2 \Phi\right) = \frac{1}{C_s} \frac{\partial^2 C_s}{\partial \kappa_2^2}$. 
				\item \label{E_sin_x_cos_y} $E(\sin \Phi \cos \Theta ) =  E(\sin \Theta \cos \Phi) = 0$.
				\item \label{E_sin_x_cos_x} $E(\sin \Theta \cos \Theta) = E(\sin \Phi \cos \Phi) = 0$.
			\end{enumerate}
			\label{vms_expec_res}
		\end{prop}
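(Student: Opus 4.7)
The plan is to establish all seven parts via two complementary techniques: differentiation under the integral sign applied to $C_s(\kappa_1, \kappa_2, \lambda)$ for parts (i), (iii), (iv), (v), and symmetry of the density for parts (vi), (vii). Part (ii) will follow from the explicit series representation in (\ref{del_C_lambda_expr}).

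For the differentiation-based parts I would begin from the defining integral
\[
C_s = \int_{-\pi}^{\pi}\int_{-\pi}^{\pi} \exp\!\bigl[\kappa_1 \cos\theta + \kappa_2 \cos\phi + \lambda \sin\theta \sin\phi\bigr]\, d\theta\, d\phi,
\]
which is valid since $\mu_1 = \mu_2 = 0$ without loss of generality. Because the integrand is smooth and bounded on the compact square $[-\pi,\pi]^2$, differentiation under the integral is immediate. Differentiating once in $\kappa_1$ brings down $\cos\theta$; dividing by $C_s$ yields $E(\cos\Theta)$, proving (iv). Analogously, $\partial C_s/\partial\lambda$ produces $E(\sin\Theta\sin\Phi)$ (part (i)), $\partial^2 C_s/\partial\kappa_1 \partial\kappa_2$ gives $E(\cos\Theta\cos\Phi)$ (part (iii)), and $\partial^2 C_s/\partial\kappa_i^2$ gives $E(\cos^2\Theta)$ or $E(\cos^2\Phi)$ (part (v)).

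For part (ii), I would appeal to the series expansion (\ref{del_C_lambda_expr}): since $I_m(\kappa_i) \geq 0$ for $\kappa_i \geq 0$ and $m \geq 0$, and $m\binom{2m}{m} \geq 0$, every summand carries the sign $\sgn(\lambda^{2m-1}) = \sgn(\lambda)$. Combining with $C_s > 0$ and part (i), this gives $\sgn\bigl(E(\sin\Theta\sin\Phi)\bigr) = \sgn(\lambda)$.

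Parts (vi) and (vii) will follow from the invariance of the density under $(\theta,\phi) \mapsto (-\theta,-\phi)$. Indeed, $\cos$ is even, and $\sin(-\theta)\sin(-\phi) = \sin\theta\sin\phi$, so the joint density (and the Lebesgue measure on $[-\pi,\pi)^2$) is preserved. Yet each of $\sin\Theta\cos\Phi$, $\sin\Phi\cos\Theta$, $\sin\Theta\cos\Theta$, and $\sin\Phi\cos\Phi$ changes sign under this transformation, so each expectation equals its own negative and therefore vanishes. The main obstacle is really just bookkeeping: there is no deep difficulty here, though some care is needed to identify the correct joint symmetry---note that $\theta \mapsto -\theta$ alone does \emph{not} preserve the density (it flips the sign of the $\lambda \sin\theta\sin\phi$ term), so one must combine both coordinate reflections to produce an invariance.
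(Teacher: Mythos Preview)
Your argument is correct. Parts (i)--(v) and part (ii) match the paper's proof essentially verbatim: differentiation under the integral for the moment identities, and the series expansion (\ref{del_C_lambda_expr}) for the sign.

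The genuine difference is in part (vi). The paper proves $E(\sin\Phi\cos\Theta)=0$ by conditioning: it writes $E[\cos\Theta\,E(\sin\Phi\mid\Theta)]$, invokes the fact that $\Phi\mid\Theta=\theta$ is univariate von Mises with explicit $a(\theta),b(\theta)$, applies the auxiliary result $E(\sin X)=\tfrac{I_1(\kappa)}{I_0(\kappa)}\sin\mu$ (Proposition~\ref{prop_Esinx_vm}), and then argues that the resulting integrand is odd in $\theta$. Your route is shorter and more self-contained: the joint invariance under $(\theta,\phi)\mapsto(-\theta,-\phi)$ kills all four mixed expectations in one stroke, without any conditional calculation or appeal to Proposition~\ref{prop_Esinx_vm}. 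Your caveat that the single reflection $\theta\mapsto-\theta$ does \emph{not} work (it flips the $\lambda\sin\theta\sin\phi$ term) is exactly right and is the only subtlety. What the paper's approach buys is perhaps a closer parallel with the cosine-model proof of Proposition~\ref{prop_vmc_results}\ref{E_c_sin_x_cos_y}, where the conditional structure is a bit different; but your symmetry argument would work equally well there, since the cosine density is also invariant under $(\theta,\phi)\mapsto(-\theta,-\phi)$. For part (vii) both proofs use symmetry, the paper citing marginal symmetry and you citing the same joint reflection; these are equivalent here.
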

		
		\begin{proof}
			\begin{equation}\label{C_int}
			C_s = \int_{-\pi}^{\pi} \int_{-\pi}^{\pi} \exp\left(\kappa_1 \cos \theta + \kappa_2 \cos \phi + \kappa_3 \sin \theta \sin \phi \right)\: d\theta \: d\phi
			\end{equation}
			
			\noindent Because the integrand in (\ref{C_int}) is smooth and has continuous first and second order partial derivatives with respect to the parameters $(\kappa_1, \kappa_2, \kappa_3)$, and the limits of the integral are finite and constant (free of the parameters), partial differentiation with respect to the parameters, and the integration can be done in interchangeable orders (Leibniz's rule). 
			
			\begin{enumerate}[label=(\roman*)]
				\item Differentiating both sides of (\ref{C_int}) partially with respect to $\kappa_3$, and then applying Leibniz's rule, we get
				\begin{align*}
				\frac{\partial C_s}{\partial \kappa_3} &= \int_{-\pi}^{\pi} \int_{-\pi}^{\pi} \sin \theta \sin \phi \: \exp\left(\kappa_1 \cos \theta + \kappa_2 \cos \phi + \kappa_3 \sin \theta \sin \phi \right)\: d\theta \: d\phi \\
				&= C_s E\left(\sin \Theta \sin \Phi \right).
				\end{align*}
				
				\item Let $g(\lambda) =  \frac{\partial C_c}{\partial \lambda}$. Since $C_s > 0$, following part~\ref{E_sin_x_sin_y}, it is enough to show that $\sgn (g(\lambda)) = \sgn (\lambda)$. From the infinite series representation (\ref{del_C_lambda_expr}) we get 
				\begin{align*}
				g(\lambda) = 8 \pi^2  \sum_{m=1}^{\infty} m \binom{2m}{m} \frac{\lambda^{2m-1}}{(4\kappa_1 \kappa_2)^m} I_{m}(\kappa_1) I_{m}(\kappa_2) \lesseqgtr 0
				\end{align*}
				according as $\lambda \lesseqgtr 0$. This completes the proof.

				\item The result is obtained by partially differentiating  (\ref{C_int}) twice, once with respect $\kappa_1$ and then with respect to $\kappa_2$, and then by applying Leibniz's rule.
				
				\item The proof is given in \citet[Theorem~2(b)]{singh:2002}.
				
				\item The first half is obtained by partially  differentiating  (\ref{C_int}) twice  with respect to $\kappa_1$, and the second half, with respect to $\kappa_2$; followed by an application of Leibniz's rule.
				
				\item We shall only prove the first half. The proof of the second half is similar. It follows (see \citet{singh:2002}) that the conditional distribution of $\Phi$ given $\Theta = \theta$ is univariate von Mises $\vm\left(\kappa = a(\theta), \mu = b(\theta) \right)$, and the marginal density  of $\Theta$ is given by:
				\[
				f_\Theta (\theta) = \frac{2 \pi I_0(a(\theta))}{C_s} \exp(\kappa_1 \cos \theta) \one_{[-\pi, \pi)} (\theta)
				\]
				where 
				\[
				a(\theta) = \left\{ \kappa_2^2 + \kappa_3^2 \sin^2 \theta \right\}^{1/2} \text{ and } b(\theta) = \tan^{-1} \left(\frac{\kappa_3}{\kappa_2} \sin \theta \right).
				\]  
				Note that $f_\Theta$ is symmetric about $(\mu_1 = )\; 0$. Therefore, we have
				\begin{align*}
				E\left(\sin \Phi \cos \Theta \right) &= E \left[\cos \Theta \: E\left(\sin \Phi \mid \Theta \right)\right] \\
				&= E \left[\cos \Theta \: \frac{I_1(a(\Theta))}{I_0(a(\Theta)) } \: \sin(\beta(\Theta)) \right] \\
				&= E \left[\cos \Theta \: \frac{I_1(a(\Theta))}{I_0(a(\Theta)) } \: \frac{(\kappa_3/\kappa_2) \sin  \Theta}{\sqrt{1 + (\kappa_3/\kappa_2)^2 \sin^2 \Theta}} \right] \\
				&= 0,
				\end{align*}
				where the second equality follows from Proposition~\ref{prop_Esinx_vm}, and the last from the fact that the associated integral is an odd function.

				\item These results are immediate consequences of symmetry of the marginal distributions.   
			\end{enumerate}
		\end{proof}

		\begin{prop} \label{prop_Esinx_vm}
			Let $X$ have a univariate von Mises distribution $\vm(\kappa, \mu)$. Then $E(\sin X) = \frac{I_1(\kappa)}{I_0(\kappa)} \sin \mu$.
		\end{prop}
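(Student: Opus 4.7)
The plan is to compute the expectation directly by shifting the variable of integration to exploit the circular symmetry of the density. Starting from
\[
E(\sin X) = \frac{1}{2\pi I_0(\kappa)} \int_{-\pi}^{\pi} \sin x \, \exp(\kappa \cos(x - \mu)) \, dx,
\]
I would substitute $y = x - \mu$. Since the integrand is $2\pi$-periodic in $x$, the limits can be taken as $-\pi$ to $\pi$ in $y$ as well. Expanding $\sin x = \sin(y + \mu) = \sin y \cos \mu + \cos y \sin \mu$ splits the integral into two pieces.

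The first piece carries a factor $\int_{-\pi}^{\pi} \sin y \, e^{\kappa \cos y} \, dy$, which vanishes because the integrand is odd in $y$ on a symmetric interval (the exponential depends on $\cos y$, which is even). So only the $\sin \mu$ term survives, giving
\[
E(\sin X) = \frac{\sin \mu}{2\pi I_0(\kappa)} \int_{-\pi}^{\pi} \cos y \, e^{\kappa \cos y} \, dy.
\]

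To evaluate the remaining integral, I would differentiate the standard identity $\int_{-\pi}^{\pi} e^{\kappa \cos y} \, dy = 2\pi I_0(\kappa)$ with respect to $\kappa$ (justified by Leibniz's rule, as the integrand is smooth in $\kappa$ on a bounded interval), yielding $\int_{-\pi}^{\pi} \cos y \, e^{\kappa \cos y} \, dy = 2\pi I_0'(\kappa)$. Using the Bessel-function identity $I_0'(\kappa) = I_1(\kappa)$ then produces
\[
E(\sin X) = \frac{\sin \mu \cdot 2\pi I_1(\kappa)}{2\pi I_0(\kappa)} = \frac{I_1(\kappa)}{I_0(\kappa)} \sin \mu,
\]
as claimed. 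There is no genuine obstacle here: the argument is a routine change of variables plus the standard integral representation of the modified Bessel function, and the only thing to be slightly careful about is invoking Leibniz's rule to justify differentiating under the integral sign.
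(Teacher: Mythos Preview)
Your proof is correct. It differs slightly from the paper's: rather than integrating directly, the paper writes down the two identities $E[\sin(X-\mu)]=0$ (by symmetry of the density about $\mu$) and $E[\cos(X-\mu)]=I_1(\kappa)/I_0(\kappa)$ (the standard mean-resultant-length formula), expands both via the addition formulas, and solves the resulting $2\times 2$ linear system for $E(\sin X)$ and $E(\cos X)$ simultaneously. Your change-of-variables computation is really the same symmetry argument unpacked at the level of the integral, and is arguably more direct since you never need to introduce $E(\cos X)$ as an auxiliary unknown; the paper's version has the minor bonus of yielding $E(\cos X)=\frac{I_1(\kappa)}{I_0(\kappa)}\cos\mu$ for free.
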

		\begin{proof}
			Because the density of $X$ is symmetric about $\mu$, we have,
			\begin{equation} \label{eqn_sinx}
			E[\sin(X - \mu)] = E(\sin X) \cos \mu - E(\cos X) \sin \mu = 0.
			\end{equation}
			Also (see, e.g., \citet[\S 9.6.19]{abramowitz:1964}),
			\begin{equation} \label{eqn_cosx}
			E[\cos(X - \mu)] = E(\cos X) \cos \mu + E(\sin X) \sin \mu = \frac{I_1(\kappa)}{I_0(\kappa)}.
			\end{equation}
			Solving for $E(\sin X)$ from (\ref{eqn_sinx}) and (\ref{eqn_cosx}) yields $E(\sin X) = \frac{I_1(\kappa)}{I_0(\kappa)} \sin \mu$.
		\end{proof}

		\begin{prop}  \label{prop_vmc_results}
			Let $(\Theta, \Phi) \sim \vmc(\kappa_1, \kappa_2, \kappa_3, 0, 0)$. Then
			\begin{enumerate}[label = (\roman*)]
				\item \label{E_c_cos_x_cos_y} $E\left(\cos \Theta \cos \Phi \right) = \frac{1}{C_c}  \frac{\partial^2 C_c}{\partial \kappa_1 \partial \kappa_2}$.
				\item \label{E_c_sin_x_sin_y} $E\left(\sin \Theta \sin \Phi \right) = \frac{1}{C_c} \left\{\frac{\partial C_c}{\partial \kappa_3} -  \frac{\partial^2 C_c}{\partial \kappa_1 \partial \kappa_2} \right\}$.		
				\item \label{E_c_sin_x_sin_y_sgn} $\sgn( E(\sin \Phi \sin \Theta)) = \sgn (\kappa_3)$.
				\item \label{E_c_cos_x} $E\left(\cos \Theta \right) = \frac{1}{C_s} \frac{\partial C_s}{\partial \kappa_1}$, and $E\left(\cos \Phi\right) = \frac{1}{C_s} \frac{\partial C_s}{\partial \kappa_2}$.
				\item \label{E_c_cos2_x} $E\left(\cos^2 \Theta \right) = \frac{1}{C_c} \frac{\partial^2 C_c}{\partial \kappa_1^2}$, and $E\left(\cos^2 \Phi\right) = \frac{1}{C_c} \frac{\partial^2 C_c}{\partial \kappa_2^2}$. 
				\item \label{E_c_sin_x_cos_y} $E(\sin \Phi \cos \Theta) = E(\sin \Theta \cos \Phi ) = 0$.
				\item \label{E_c_sin_x_cos_x} $E(\sin \Theta \cos \Theta) = E(\sin \Phi \cos \Phi) = 0$.
			\end{enumerate}
			\label{vmc_expec_res}
		\end{prop}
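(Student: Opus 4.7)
The plan is to mirror the strategy used in the proof of Proposition~\ref{prop_vms_results}. Writing
\[
C_c = \int_{-\pi}^{\pi}\int_{-\pi}^{\pi} \exp\bigl[\kappa_1\cos\theta + \kappa_2\cos\phi + \kappa_3\cos(\theta-\phi)\bigr]\,d\theta\,d\phi,
\]
the integrand is smooth in $(\kappa_1,\kappa_2,\kappa_3)$ with finite integration limits, so Leibniz's rule applies. Differentiating in $\kappa_1$ once or twice pulls down a factor of $\cos\theta$ or $\cos^2\theta$, immediately giving (iv) and (v); mixing a first derivative in each of $\kappa_1$ and $\kappa_2$ pulls down $\cos\theta\cos\phi$, giving (i). For (ii), differentiating once in $\kappa_3$ pulls down $\cos(\theta-\phi) = \cos\theta\cos\phi + \sin\theta\sin\phi$, so
\[
\frac{1}{C_c}\frac{\partial C_c}{\partial \kappa_3} = E[\cos\Theta\cos\Phi] + E[\sin\Theta\sin\Phi],
\]
and subtracting the identity in (i) yields (ii).

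For (iii), I would extract the conditional distribution of $\Phi\mid\Theta=\theta$ under the cosine model. Grouping $\phi$-dependent terms, the exponent contains $(\kappa_2+\kappa_3\cos\theta)\cos\phi + (\kappa_3\sin\theta)\sin\phi$, which can be rewritten as $A(\theta)\cos(\phi-\beta(\theta))$ with
\[
A(\theta) = \sqrt{\kappa_2^2 + 2\kappa_2\kappa_3\cos\theta + \kappa_3^2}, \qquad \sin\beta(\theta) = \frac{\kappa_3\sin\theta}{A(\theta)}.
\]
Hence $\Phi\mid\Theta=\theta \sim \vm(A(\theta),\beta(\theta))$, and by Proposition~\ref{prop_Esinx_vm},
\[
E[\sin\Phi \mid \Theta=\theta] = \frac{I_1(A(\theta))}{I_0(A(\theta))}\cdot\frac{\kappa_3\sin\theta}{A(\theta)}.
\]
Multiplying by $\sin\theta$ and taking the outer expectation yields
\[
E[\sin\Theta\sin\Phi] = \kappa_3 \cdot E\!\left[\sin^2\Theta\,\frac{I_1(A(\Theta))}{A(\Theta)\,I_0(A(\Theta))}\right].
\]
Since $I_1(x)/(x\,I_0(x)) > 0$ for $x>0$ and $\sin^2\Theta$ is nonnegative with positive probability, the bracketed expectation is strictly positive whenever $\kappa_3\neq 0$, so $\sgn\bigl(E[\sin\Theta\sin\Phi]\bigr) = \sgn(\kappa_3)$.

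For (vi) and (vii), I would invoke symmetry. The marginal density of $\Theta$ is proportional to $I_0(A(\theta))\exp(\kappa_1\cos\theta)$, which is even in $\theta$ because both $A(\theta)$ and $\cos\theta$ are even. Thus $E[\sin\Theta\cos\Theta]=0$, and by the symmetric roles of $\Theta,\Phi$ also $E[\sin\Phi\cos\Phi]=0$, proving (vii). For (vi), the tower property gives $E[\sin\Phi\cos\Theta] = \kappa_3\, E\!\left[\sin\Theta\cos\Theta \cdot I_1(A(\Theta))/(A(\Theta)\,I_0(A(\Theta)))\right]$, whose integrand is an odd function of $\theta$ integrated against an even density, so it vanishes; the other half is analogous. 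The main obstacle will be part (iii): identifying the correct conditional distribution in the cosine model is less immediate than in the sine case because the coupling term $\kappa_3\cos(\theta-\phi)$ contributes to \emph{both} the $\cos\phi$ and $\sin\phi$ coefficients, and one must then combine this with positivity of the Bessel ratio to extract the sign cleanly.
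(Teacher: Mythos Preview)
Your proof is correct, and for parts (i), (ii), (iv)--(vii) it follows essentially the same route as the paper: differentiation under the integral for the moment identities, the conditional von Mises structure combined with Proposition~\ref{prop_Esinx_vm} and an odd-function argument for (vi), and marginal symmetry for (vii).

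The genuine difference is in part (iii). The paper does not condition; instead it works directly with the infinite series expansions in Remark~\ref{del_C_expr}, showing that
\[
\frac{\partial C_c}{\partial \kappa_3} - \frac{\partial^2 C_c}{\partial \kappa_1\,\partial \kappa_2}
= \sum_{m\geq 1} a_m\, I_m(\kappa_3),
\qquad a_m = 2\pi^2\,[I_{m-1}(\kappa_1)-I_{m+1}(\kappa_1)]\,[I_{m-1}(\kappa_2)-I_{m+1}(\kappa_2)] > 0,
\]
and then handles the three cases $\kappa_3=0$, $\kappa_3>0$, $\kappa_3<0$ separately, the last requiring a monotonicity argument on an alternating series. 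Your approach instead reuses the conditional distribution $\Phi\mid\Theta$ already needed for (vi) to factor $\kappa_3$ out explicitly,
\[
E[\sin\Theta\sin\Phi] = \kappa_3 \cdot E\!\left[\sin^2\Theta\,\frac{I_1(A(\Theta))}{A(\Theta)\, I_0(A(\Theta))}\right],
\]
so the sign is immediate from positivity of the Bessel ratio. This is more self-contained and avoids the series bookkeeping (in particular the alternating-series bound for $\kappa_3<0$); the paper's route, on the other hand, gives the sign result purely at the level of the normalizing constant and its derivatives, without invoking any probabilistic structure. Both are valid, and yours is arguably cleaner here since the conditional machinery is set up anyway.
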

		
		\begin{proof}
			We have
			\begin{equation}\label{C_c_int}
			C_c = \int_{-\pi}^{\pi} \int_{-\pi}^{\pi} \exp\left(\kappa_1 \cos \theta + \kappa_2 \cos \phi + \kappa_3 \cos (\theta - \phi) \right)\: d\theta \: d\phi
			\end{equation}
			
			\noindent Using the same arguments as in the von Mises sine case, it follows that partial differentiation with respect to the parameters, and the integration can be done in interchangeable orders (Leibniz's rule). 
			
			\begin{enumerate}[label=(\roman*)]
				\item 	Differentiating both sides of  (\ref{C_c_int}) twice, once with respect $\kappa_1$ and then with respect to $\kappa_2$, and then by applying Leibniz's rule, we get
				\begin{align*}
				& \quad \frac{\partial^2 C_c}{\partial \kappa_1 \partial \kappa_2} \\
				&= \int_{-\pi}^{\pi} \int_{-\pi}^{\pi} \sin \theta \sin \phi \: \exp\left(\kappa_1 \cos \theta + \kappa_2 \cos \phi + \kappa_3 \cos (\theta - \phi) \right)\: d\theta \: d\phi \\
				&= C_c E \left(\cos \Theta \cos \Phi\right).
				\end{align*}
				
				\item Differentiating (\ref{C_c_int}) partially with respect to $\kappa_3$, and then applying Leibniz's rule, we get
				\begin{align*}
				\frac{\partial C_c}{\partial \kappa_3} &= \int_{-\pi}^{\pi} \int_{-\pi}^{\pi} \cos (\theta-\phi) \: \exp\left(\kappa_1 \cos \theta + \kappa_2 \cos \phi + \kappa_3 \cos (\theta - \phi) \right)\: d\theta \: d\phi \\
				&= C_c E\cos \left(\Theta - \Phi \right) = C_c E\cos \left(\cos \Theta \cos \Phi + \sin \Theta \sin \Phi \right).
				\end{align*}
				This, together with part~\ref{E_c_cos_x_cos_y} yields
				\[
				\frac{\partial C_c}{\partial \kappa_3} - 	\frac{\partial^2 C_c}{\partial \kappa_1 \partial \kappa_2} = C_c \: E (\sin \Theta \sin \Phi)
				\]
				
				\item Let $g(\kappa_3) =  \frac{\partial C_c}{\partial \kappa_3} - 	\frac{\partial^2 C_c}{\partial \kappa_1 \partial \kappa_2}$. Since $C_c > 0$, following part~\ref{E_c_sin_x_sin_y}, it is enough to show that $\sgn (g(\kappa_3)) = \sgn (\kappa_3)$. Straightforward algebra on the infinite series representations (\ref{del_C_c_k3_expr}) and (\ref{del_C_c_k1_k2_expr}) of $\frac{\partial C_c}{\partial \kappa_3}$ and $\frac{\partial^2 C_c}{\partial \kappa_1 \partial \kappa_2}$ yields,
				\begin{align*}
				& \quad g(\kappa_3) \\
				&= 2 \pi^2 \left\lbrace \sum_{m=1}^\infty I_{m-1}(\kappa_1) I_{m-1}(\kappa_2) I_m(\kappa_3) - \sum_{m=1}^\infty I_{m-1}(\kappa_1) I_{m+1}(\kappa_2) I_m(\kappa_3) \right. \\
				& \qquad  \left.  -  \sum_{m=1}^\infty I_{m+1}(\kappa_1) I_{m-1}(\kappa_2) I_m(\kappa_3) + \sum_{m=1}^\infty I_{m+1}(\kappa_1) I_{m+1}(\kappa_2) I_m(\kappa_3) \right\} \\
				&= 2 \pi^2 \sum_{m=1}^\infty [I_{m-1}(\kappa_1) -  I_{m+1}(\kappa_1)] [I_{m-1}(\kappa_2) - I_{m+1}(\kappa_2)] I_m(\kappa_3)  \\
				&=  \sum_{m=1}^\infty a_m \: I_m(\kappa_3) \numbereqn \label{del_C_c_k3_minus_k1k2}
				\end{align*} 
				where $a_m = 2 \pi^2 [I_{m-1}(\kappa_1) -  I_{m+1}(\kappa_1)] [I_{m-1}(\kappa_2) - I_{m+1}(\kappa_2)]$. Note that $(a_m)_{m \geq 1}$ is a decreasing sequence of positive real numbers since $I_n(x) > I_{n+1}(x)$ for $n \geq 1$ and $x \geq 0$. We consider the cases $\kappa_3 = 0$, $\kappa_3 > 0$ and $\kappa_3 < 0$ separately, and note the sign of $g(\kappa_3)$ in each case.
				
				\begin{enumerate}[label=(\alph*)]
					\item  If $\kappa_3 = 0$, then $I_m(\kappa_3) = 0$ for all $m = 1, 2, \cdots$. Consequently, the right hand side of (\ref{del_C_c_k3_minus_k1k2}) becomes zero.
					
					\item If $\kappa_3 > 0$, then $I_m(\kappa_3) > 0$ for all $m = 1, 2, \cdots$. Therefore, the right hand side of (\ref{del_C_c_k3_minus_k1k2}) is a series of positive terms, and hence is positive.
					
					\item If $\kappa_3 < 0$, then $I_m(\kappa_3) = (-1)^m I_m(|\kappa_3|)$ for $m = 1, 2, \cdots$, and the right hand side of (\ref{del_C_c_k3_minus_k1k2}) is an (absolutely convergent) alternating series
				   \[
				   S = \sum_{m = 1}^\infty (-1)^m \: a_m \: I_m(|\kappa_3|).
				   \]
				   Note that
				   \begin{align*}
				   	S &= - \sum_{m = 1}^\infty a_{2m - 1} \: I_{2m - 1}(|\kappa_3|) + \sum_{m = 1}^\infty a_{2m} \: I_{2m}(|\kappa_3|) \\
				   	&< - \sum_{m = 1}^\infty a_{2m - 1} \: I_{2m - 1}(|\kappa_3|) + \sum_{m = 1}^\infty a_{2m-1} \: I_{2m}(|\kappa_3|) \\
				   	&= - \sum_{m = 1}^\infty a_{2m - 1} \: [I_{2m - 1}(|\kappa_3|) - I_{2m}(|\kappa_3|)] = - S^* \\
				   	&< 0. 
				   \end{align*}
				   where the inequality in the second line follows from the fact that $(a_m)_{m \geq 1}$ is decreasing and positive, and that in the last line is a consequence of the fact that $S^*$, being a series of positive terms (since $I_{2m - 1}(|\kappa_3|) > I_{2m}(|\kappa_3|)$  for all $m \geq 1$), is positive.
				\end{enumerate}

				\item The first part is proved by partially  differentiating  (\ref{C_c_int})  with respect to $\kappa_1$, and the second part, with respect to $\kappa_2$; followed by an application of Leibniz's rule.
				
				\item The first half is obtained by partially  differentiating  (\ref{C_c_int}) twice  with respect to $\kappa_1$, and the second half, with respect to $\kappa_2$; followed by an application of Leibniz's rule.
				
				\item We shall only prove the first half. The proof of the second half is similar. It follows from \citet{mardia:2007} that the conditional distribution of $\Phi$ given $\Theta = \theta$ is univariate von Mies $\vm\left(\kappa = \kappa_{13}, \mu = \theta_0 \right)$, and the marginal density  of $\Theta$ is given by:
				\[
				g_\Theta (\theta) = \frac{2 \pi I_0(\kappa_{13}(\theta))}{C_c} \: \exp(\kappa_2 \cos \theta) \: \one_{[-\pi, \pi)} (\theta)
				\]
				where 
				\[
				\kappa_{13} (\theta) = \kappa_1^2 + \kappa_3^2 + 2 \kappa_1 \kappa_3 \cos \theta 
				\text{ and }	\theta_0 = \tan^{-1} \left(\frac{\kappa_3 \sin \theta}{\kappa_1 + \kappa_3 \cos \theta}  \right).
				\]  
				Note that $f_\Theta$ is symmetric about $(\mu_1 = )\; 0$. Therefore, we have
				\begin{align*}
				E\left(\sin \Phi \cos \Theta \right) &= E \left[\cos \Theta \: E\left(\sin \Phi \mid \Theta \right)\right] \\
				&= E \left[\cos \Theta \: \frac{I_1(\kappa_{13}(\Theta))}{I_0(\kappa_{13}(\Theta)) } \: \sin \tan^{-1} \left(\frac{\kappa_3 \sin \Theta}{\kappa_1 + \kappa_3 \cos \Theta}  \right) \right] \\
				&= E \left[\cos \Theta \: \frac{I_1(\kappa_{13}(\Theta))}{I_0(\kappa_{13}(\Theta)) } \: \frac{\left(\frac{\kappa_3 \sin \Theta}{\kappa_1 + \kappa_3 \cos \Theta}  \right)}{\sqrt{1 + \left(\frac{\kappa_3 \sin \Theta}{\kappa_1 + \kappa_3 \cos \Theta}  \right)^2}} \right] \\
				&= 0,
				\end{align*}
				where the second equality follows from Proposition~\ref{prop_Esinx_vm}, and the last from the fact that the associated integral is an odd function. 

				\item These results are immediate consequences of symmetry of the marginal distributions.    
			\end{enumerate}
		\end{proof}

	\end{appendices}
	
	\bibliographystyle{apa}
	\bibliography{circ_cor_ref}
	
\end{document}